\numberwithin{equation}{section}
\numberwithin{figure}{section}
\theoremstyle{plain}
\newtheorem{thm}{\protect\theoremname}
  \theoremstyle{definition}
  \newtheorem{example}[thm]{\protect\examplename}
  \theoremstyle{plain}
  \newtheorem{lem}[thm]{\protect\lemmaname}
 \theoremstyle{definition}
  \newtheorem{claim}[thm]{\protect\claimname}
\title{On the choosability of complete multipartite graphs with part size four}
\DeclareMathOperator*{\ch}{ch}
\DeclareMathOperator*{\ra}{ran} 
  \providecommand{\examplename}{Example}
  \providecommand{\lemmaname}{Lemma}
\providecommand{\theoremname}{Theorem}
 \providecommand{\claimname}{Claim}
\begin{document}

\title{On the choice number of complete multipartite graphs with part size four}
\thanks{The first
 author thanks Institut Mittag-Leffler (Djursholm, Sweden) for the hospitality and creative environment.}

\author{H. A. Kierstead}
\address{School of Mathematical and Statistical Sciences\\ Arizona State University\\
Tempe, AZ 85287\\ USA} 
\email{kierstead@asu.edu}
\thanks{Research of the first 
 author is supported in part by NSA grant H98230-12-1-0212.}

 \author{ Andrew Salmon} 
 \address{6328 E. Cactus Wren Rd. \\Paradise Valley, AZ 85253 \\USA}
 \email{andrew.salmon@yale.edu}

\author{Ran Wang}
\address{School of Mathematical and Statistical Sciences\\ Arizona State University\\
Tempe, AZ 85287\\ USA}
\email{rwang31@asu.edu}

\subjclass[2000]{05C15}

\keywords{List coloring, choice number, choosable, on-line choice number}

\begin{abstract}Let $\mathrm{ch}(G)$ denote the choice number of a graph $G$, and let $K_{s*k}$ be the complete $k$-partite graph with $s$ vertices in each
part. Erd\H{o}s, Rubin, and Taylor showed that $\mathrm{ch}( K_{2*k})=k$, 
and suggested the problem of determining the choice number of $K_{s*k}.$ The first author established
 $\mathrm{ch}( K_{3*k})=\left\lceil \frac{4k-1}{3}\right\rceil$. Here we prove $\mathrm{ch} (K_{4*k})=\left\lceil \frac{3k-1}{2}\right\rceil$.
\end{abstract}

\maketitle

\section{Introduction}

Let $G=(V,E)$ be a graph. A \textit{list assignment} $L$ for $G$ is a function
$L:V\rightarrow2^{\mathbb{N}}$, where $\mathbb{N}$ is the set of natural numbers
and $2^{\mathbb{N}}$ is the power set of $\mathbb{N}$. If $|L(v)|=k$ for all vertices
$v\in V$, then $L$ is a \textit{$k$-list assignment} for $G$. A\textit{n $L$-coloring}
$f$ from a list assignment $L$ is a function $f:V\rightarrow\mathbb{N}$ such that
$f(v)\in L(v)$ for all vertices $v\in V$ and $f(x)\ne f(y)$ whenever $xy\in E$.
$G$ is \textit{$L$-colorable} if there exists an $L$-coloring of $G$; it is \textit{$k$-choosable}
if it is $L$-choosable for all $k$-list assignments $L$. The \textit{list chromatic
number} or \textit{choi}ce\emph{ number} of $G$, denoted $\ch(G)$, is the smallest
integer $k$ such that $G$ is $k$-choosable. The general list coloring problem
may consider list assignments with uneven list sizes.

The study of list coloring was initiated by Vizing~\cite{Viz} and by Erd\H{o}s,
Rubin and Taylor \cite{Erdos79}. It is a generalization of two well studied areas
of combinatorics---graph coloring and transversal theory. Restricting the list assignment
to a constant function, yields ordinary graph coloring; restricting the graph to
a clique yields the problem of finding a system of distinct representatives (SDR)
for the family of lists. Both restrictions play a role in this paper.  Given the
general nature of this parameter, it is hardly surprising that there are not many 
graphs whose exact choice number is known. However, there are some amazingly elegant 
results that add to the subject's charm. For example, Thomassen \cite{Thomassen94} 
proved that planar graphs have choice number at most $5$, Voight \cite{Voigt93}
proved that this is tight, and Galvin \cite{Galvin95} proved that line graphs of
bipartite graphs have choice number equal to their clique number.

Erd\H{o}s et al.\ \cite{Erdos79} suggested determining the choice number of uniform
complete multipartite graphs. More generally, let $K_{1*k_{1},2*k_{2}\dots}$ denote
the complete multipartite graph with $k_{i}$ parts of size $i$, where zero terms
in the subscript are deleted. Since $K_{1*k}$ is a clique and $K_{s*1}$ is an independent
set, these cases are trivial. Alon \cite{Alon92} proved the general bounds $c_{1}k\log s\leq\ch(K_{s*k})\leq c_{2}k\log s$ for some constants $c_1,c_2>0$. This was tightened by Gazit and Krivelevich \cite{GK}.
\begin{thm}[Gazit and Krivelevich \cite{GK}]
$\ch(K_{s*k})=(1+o(1))\frac{\log s}{\log(1+1/k)}$. 
\end{thm}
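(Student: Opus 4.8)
The plan is to establish matching upper and lower bounds on $\ch(K_{s*k})$, valid when both parameters are large and $s$ is not too small relative to $k$ (for bounded $s$ the choice number is linear in $k$, as the exact results quoted above show, so the estimate is genuinely asymptotic). The common starting point is the remark that, since each part of $K_{s*k}$ is an independent set, an $m$-list assignment $L$ with parts $V_1,\dots,V_k$ admits a proper $L$-coloring if and only if one can choose pairwise disjoint colour sets $C_1,\dots,C_k$ so that each $C_i$ is a transversal (hitting set) of the list family $\mathcal L_i=\{L(v):v\in V_i\}$. Thus $\ch(K_{s*k})\le m$ if and only if every $m$-list assignment admits such disjoint transversals.

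For the upper bound I would argue probabilistically. Given $L$ with colour universe $U$, assign each colour of $U$ independently and uniformly a ``reservation'' in $\{1,\dots,k\}$ and let $C_i$ be the set of colours reserved for $i$ that occur in $\mathcal L_i$; the $C_i$ are automatically disjoint. Now $C_i$ fails to be a transversal of $\mathcal L_i$ only if some list $L(v)$ with $v\in V_i$ contains no $i$-reserved colour, which happens with probability $(1-1/k)^m$. A union bound over the at most $sk$ lists shows that a good reservation exists as soon as $sk(1-1/k)^m<1$, i.e.\ as soon as $m>\ln(sk)/\ln\!\bigl(1+\tfrac1{k-1}\bigr)$. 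Since $\ln(1+\tfrac1{k-1})=(1+o(1))\ln(1+\tfrac1k)$ and $\ln(sk)=(1+o(1))\ln s$ in the relevant range, this gives $\ch(K_{s*k})\le(1+o(1))\dfrac{\log s}{\log(1+1/k)}$.

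The lower bound is the heart of the matter and requires a random construction: the natural explicit bad assignment — putting on each part all $m$-subsets of a universe of size about $\tfrac{k}{k-1}m$ — is already off by a factor of order $\log k$, so one is forced to randomise. Put $N=\lceil m(\ln s)^2\rceil$, give every vertex an independent uniformly random $m$-subset of $[N]$, and set $\tau=\tfrac Nm\ln s$. The key lemma is that with high probability, for every $i$, \emph{every} transversal of $\mathcal L_i$ has size at least $(1-\varepsilon)\tau$. Indeed, the probability that a fixed $h$-set $H$ is a transversal of $\mathcal L_i$ equals $\left(1-\binom{N-h}{m}/\binom{N}{m}\right)^{s}$, and a short computation shows that for $h\le(1-\varepsilon)\tau$ this is at most $\exp(-s^{\varepsilon/2})$ once $s$ is large; summing this over the $\exp((\log s)^{O(1)})$ sets $H$ of size at most $(1-\varepsilon)\tau$, and then over the $k$ parts, still tends to $0$.

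Granting the lemma, suppose some assignment $L$ of the above type had a proper colouring $f$. Then the colour classes $C_i=f(V_i)$ would be pairwise disjoint transversals of the families $\mathcal L_i$, all contained in $[N]$, so $N\ge\sum_i|C_i|\ge k(1-\varepsilon)\tau=k(1-\varepsilon)\tfrac Nm\ln s$, which forces $\ln s\le m/((1-\varepsilon)k)$. Letting $\varepsilon\to0$, it follows that $K_{s*k}$ is not $m$-choosable whenever $m<(1-o(1))k\ln s$, and hence $\ch(K_{s*k})\ge(1-o(1))k\ln s=(1-o(1))\dfrac{\log s}{\log(1+1/k)}$, using $k\ln(1+1/k)\to1$. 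The main obstacle throughout is precisely this lower bound: one must recognise that randomness is unavoidable, prove that minimum transversals of a family of random $m$-sets are \emph{robustly} large — large with enough room to spare that the union bound over candidate transversals converges, even though it cannot be pushed all the way to the sharp value $\tau$ — and choose the universe size $N$ in the narrow window that keeps $\log N$ polylogarithmic while still giving concentration, which is also where one checks that the resulting cutoff $s\approx e^{m/k}$ matches $(1+1/k)^m$ up to a $(1+o(1))$ factor.
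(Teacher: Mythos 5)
First, a point of reference: the paper does not prove this statement --- it is quoted from Gazit and Krivelevich as background, so there is no internal proof to compare against. Your two-sided probabilistic strategy (random reservation of colours to the $k$ parts for the upper bound; random $m$-lists plus a lower bound on the covering number of a random $m$-uniform family for the lower bound) is essentially the standard route to results of this type, and the architecture --- recasting $L$-colourability as the existence of pairwise disjoint transversals of the families $\mathcal L_i$, then showing minimum transversals of random families are robustly large so that $k$ disjoint ones cannot fit inside $[N]$ --- is sound.

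There is, however, a genuine soft spot in the constants, and it sits exactly in the steps you wave through. Done exactly, your union bound gives the threshold $\ln(sk)/\ln\bigl(1+\tfrac1{k-1}\bigr)$, and your transversal lemma gives a minimum transversal of size $(1+o(1))(1-s^{-1/m})N$ rather than $(1+o(1))\tfrac Nm\ln s$; with $m\asymp k\ln s$ these differ from the quantities you write down by factors of $1+\Theta(1/k)$. Consequently both halves of your argument really converge to $\ln s/\ln\bigl(1+\tfrac1{k-1}\bigr)$, and they match the displayed formula only when $k\to\infty$ (and $\log k=o(\log s)$, which your step $\ln(sk)=(1+o(1))\ln s$ also requires). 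This is not cosmetic: for fixed $k$ the displayed formula is not the answer --- $K_{s*2}=K_{s,s}$ has choice number $(1+o(1))\log_2 s$ by the classical Erd\H{o}s--Rubin--Taylor argument, not $(1+o(1))\log s/\log\tfrac32$ --- so the theorem must be read in a regime where $k\to\infty$, and your proof should state that regime explicitly rather than burying it in ``$k\ln(1+1/k)\to1$'' and ``$\ln(1+\tfrac1{k-1})=(1+o(1))\ln(1+\tfrac1k)$''. Within that regime your sketch does work, but the ``short computation'' in the key lemma (the bound $\exp(-s^{\varepsilon/2})$, and the count of candidate transversals, which is $\exp\bigl(O(\tfrac Nk\log k)\bigr)$ rather than a quantity depending on $\log s$ alone) is precisely where the $1+\Theta(1/k)$ correction and the needed relation between $k$ and $s$ surface, so it has to be written out rather than asserted.
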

The next well-known example provides the best lower bounds for small values of $s$. 
\begin{example}
$\ch(K_{s*k})\ge \lceil\frac{2(s-1)k-s+2}{s}\rceil$: Let $G=K_{s*k}$ have parts
$\{X_{1},\dots,X_{k}\}$ with $X_{i}=\{v_{i,1},\dots,v_{i,s}\}$. We will construct
an $(l-1)$-list assignment $L$ from which $G$ cannot be colored. Equitably partition
$C:=[2k-1]$ into $s$ parts $C_{1},\dots,C_{s}$. Define a list assignment $L$
for $G$ by $L(v_{i,j})=C\smallsetminus C_{j}$. Then each list has size at least
\[
2k-1-\left\lceil\frac{2k-1}{s}\right\rceil=\left\lfloor\frac{2ks-s-2k+1}{s}\right\rfloor=\left\lceil\frac{2(s-1)k-2s+2}{s}\right\rceil=l-1. 
\]
Consider any color $\alpha\in C$. Then $\alpha\in C_{i}$ for some $i\in[s]$. So
$\alpha\notin L(x_{i,j})$ for every $j\in[k]$. Thus any $L$-coloring of $G$ uses
at least two colors for every part $X_{j}$. Since vertices in distinct parts are
adjacent, they require distinct colors. As there are $k$ parts this would require
$2k>|C|$ colors, which is impossible.$ $
\end{example}
Restricting the question of Erd\"os et al., we ask for those integers $s$ such
that:
\begin{equation}
(\forall k\in\mathbb{Z}^{+})\left[\ch(K_{s*k})=l(s,k):=\left\lceil\frac{2(s-1)k-s+2}{s}\right\rceil\right].\label{qm}
\end{equation} 

The first two cases $s=2$ and $s=3$ have been solved:
\begin{thm}
[Erd\H{o}s, Rubin and Taylor \cite{Erdos79}]\label{thm:ERT}All positive integers
$k$ satisfy $ $$\ch(K_{2*k})=k$. 
\end{thm}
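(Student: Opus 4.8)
The plan is to establish the two bounds $\ch(K_{2*k})\ge k$ and $\ch(K_{2*k})\le k$ separately. The lower bound requires no work: it is exactly the case $s=2$ of the Example above, where $l(2,k)=\lceil\frac{2k}{2}\rceil=k$, so there is a $(k-1)$-list assignment admitting no proper coloring. For the upper bound I would prove, by induction on $k$, that $K_{2*k}$ is $k$-choosable. The base case $k=1$ is immediate: $K_{2*1}$ has no edges, so color each of its two vertices from its one-element list.

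For the inductive step, fix a $k$-list assignment $L$ on $K_{2*k}$ with parts $X_i=\{a_i,b_i\}$, $i\in[k]$, and split into two cases according to whether some part sees a repeated color. In the first case, $L(a_i)\cap L(b_i)\neq\emptyset$ for some $i$; pick $\alpha$ in this intersection, color both $a_i$ and $b_i$ with $\alpha$, and delete $\alpha$ from every remaining list. What is left is a copy of $K_{2*(k-1)}$ in which every list has size at least $k-1$; shrinking the lists to size exactly $k-1$ and invoking the inductive hypothesis produces a proper coloring of the remainder, and it extends the partial coloring because $\alpha$ was removed from all the relevant lists.

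In the second case, $L(a_i)\cap L(b_i)=\emptyset$ for every $i$. Here I would show that the family of all $2k$ lists has a system of distinct representatives; assigning each vertex its representative gives a proper $L$-coloring, since all $2k$ chosen colors are distinct and hence in particular adjacent vertices receive distinct colors. To check Hall's condition for a set $W$ of vertices: if $W$ contains both endpoints $a_i,b_i$ of some part, then the union of the lists of $W$ contains the disjoint sets $L(a_i)$ and $L(b_i)$, hence has size at least $2k\ge|W|$; otherwise $W$ meets each part at most once, so $|W|\le k$, and if $W\neq\emptyset$ the union already contains some single list, of size $k\ge|W|$. Thus Hall's condition holds and the desired SDR exists.

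The proof is essentially forced once the right dichotomy is spotted, so there is no serious obstacle; the only points that need care are the bookkeeping on list sizes in the first case (deleting $\alpha$ drops each affected list from size $k$ to size $k-1$, which is exactly what the inductive hypothesis needs) and the two subcases of the Hall check in the second case.
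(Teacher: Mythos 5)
Your proof is correct. The paper states this result only as a citation to Erd\H{o}s, Rubin and Taylor and gives no proof of its own, so there is nothing internal to compare against; but your argument is the standard one, and it is in fact the $s=2$ specialization of the template the paper uses for $s=3$ and $s=4$: your Case~1 (a part with a common color reduces to $K_{2*(k-1)}$) is exactly the reduction behind Lemma~\ref{ns=00003D0}, and your Case~2 is the Hall/SDR step, which for $s=2$ needs no merging because the two disjoint lists of a part already force the union over any violating set to have size $2k$. Both halves check out, including the list-size bookkeeping after deleting $\alpha$ and the two subcases of the Hall condition.
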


\begin{thm}[Kierstead \cite{Kie2000}]
\label{K-th}All positive integers $k$ satisfy $\ch(K_{3*k})=\lceil\frac{4k-1}{3}\rceil$.
\end{thm}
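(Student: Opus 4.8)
Since $\left\lceil\frac{4k-1}{3}\right\rceil=l(3,k)$, the inequality $\ch(K_{3*k})\ge\left\lceil\frac{4k-1}{3}\right\rceil$ is precisely the $s=3$ case of the Example, so the entire content of the theorem is the matching upper bound: writing $\ell(k):=l(3,k)=\left\lceil\frac{4k-1}{3}\right\rceil$, every $\ell(k)$-list assignment $L$ of $K_{3*k}$ admits an $L$-coloring. The plan is to prove this by induction on $k$, with the cases $k\le 2$ checked directly. The organizing observation is that an $L$-coloring of a complete multipartite graph is the same thing as a choice, for every part $X_i=\{v_{i,1},v_{i,2},v_{i,3}\}$, of colors $f(v_{i,j})\in L(v_{i,j})$ whose color sets $S_i:=\{f(v_{i,1}),f(v_{i,2}),f(v_{i,3})\}$ are pairwise disjoint across parts; so the task is to hand each part a small color set, these sets being globally disjoint.

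The easy reduction disposes of any part $X_i$ whose three lists share a common color $c\in L(v_{i,1})\cap L(v_{i,2})\cap L(v_{i,3})$: color all of $X_i$ with $c$, delete $X_i$, and strike $c$ from every surviving list. This yields an instance of $K_{3*(k-1)}$ in which each list has shrunk by at most one, hence still has size at least $\ell(k)-1\ge\ell(k-1)$, since $\ell(k)-\ell(k-1)\in\{1,2\}$ for every $k$. The inductive hypothesis then colors the smaller graph with colors different from $c$, and the two colorings combine into an $L$-coloring. Thus I may assume that no part has a color lying in all three of its lists, so that every part genuinely needs at least two colors.

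This last case is the crux. Because the color sets of distinct parts must be disjoint, assigning two private colors to each of the $k$ parts would cost $2k$ colors, whereas the budget is unforgiving: coloring and deleting any $t$ parts is only admissible if every remaining list still has at least $\ell(k-t)\approx\ell(k)-\frac{4t}{3}$ colors, and $\frac{4t}{3}<2t$. The escape is that deleting a color only shrinks the lists that actually contain it, so the difficulty is concentrated exactly in the regime of the Example, where the colors are spread across roughly $2k$ lists apiece. To exploit this I would strengthen the induction to allow unequal list sizes measured against a weighted budget (a potential counting each part's minimum cover against the available colors), so that the inductive step decreases this potential rather than merely $k$; and within the hard case I would analyze, for each part, the bipartite ``cover'' graph between its three vertices and the colors, combining a K\"onig/Hall-type argument that locates a two-color cover with a global double count of the incidences $\sum_v|L(v)|=3k\,\ell(k)$ against the per-color multiplicities. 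The main obstacle is exactly this counting-and-covering step: since our lists are precisely one color larger than the Example's while $\ell(k)$ grows by $\frac{4}{3}$ per part, the estimates are tight with no slack, and I expect the case analysis to have to be organized around the period-three behavior of $\left\lceil\frac{4k-1}{3}\right\rceil$ in order to close the margin.
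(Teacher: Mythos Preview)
Your proposal correctly identifies the lower bound and the common-color reduction (which is Lemma~\ref{ns=00003D0} in the paper), but the hard case is not proved---it is only sketched as a plan. Phrases like ``I would strengthen the induction to allow unequal list sizes measured against a weighted budget'' and ``I expect the case analysis to have to be organized around the period-three behavior'' describe intentions, not arguments. No potential function is defined, no inductive invariant is stated, and the ``K\"onig/Hall-type argument that locates a two-color cover'' is never carried out. Since this is precisely where the theorem's content lies, the proposal as written has a genuine gap.

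The paper's route is quite different from the one you outline, and it avoids the difficulties you anticipate. Rather than a weighted induction on $k$, the paper takes a minimal counterexample and invokes the Small Pot Lemma to assume the total color pool has size at most $3k-1$. This global bound, combined with $n_3(X)=0$, forces $n_2(X)\ge k$ for every part $X$: each part has many colors appearing in exactly two of its three lists. The key move is then to \emph{merge} a pair of vertices in each part (reducing it to a two-vertex part) so that the merged vertex's list---the intersection of the two original lists---is large enough, and then to verify Hall's condition for the resulting family of lists directly. The parts are split into a ``reserved'' set $\mathcal{R}$ of size $l-k$ and an ``unreserved'' set $\mathcal{U}$ of size $2k-l$, with the pairs in $\mathcal{U}$ chosen greedily to maximize intersection sizes; Hall's condition is then checked by a short case analysis on how a putative bad set $S$ meets each part. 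There is no period-three case split and no potential function: the Small Pot Lemma supplies exactly the slack that your double-counting heuristic was groping for.
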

Recently, Kozik, Micek, and Zhu \cite{Kozik2014} gave a very different proof of
Theorem~\ref{K-th}. The following more general result appears in \cite{Ohba04}.
\begin{thm}[Ohba \cite{Ohba04}]
$\ch(K_{1*k_{1},3*k_{3}})=\max\{k,\lceil\frac{n+k-1}{3}\rceil\}$, where $k=k_{1}+k_{3}$
and $n=k_{1}+3k_{3}$. 
\end{thm}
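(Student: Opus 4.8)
The plan is to establish matching bounds, writing $l=\max\{k,\lceil\frac{n+k-1}{3}\rceil\}$, and showing that $K_{1*k_1,3*k_3}$ is $l$-choosable but not $(l-1)$-choosable. For the lower bound there are two sources. Choosing one vertex from each part induces a clique $K_k$, so $\ch\ge\ch(K_k)=k$. For the bound $\ch\ge\lceil\frac{n+k-1}{3}\rceil$ I would give an explicit bad $(l-1)$-list assignment in the spirit of the Example: fix a palette $C$ with $|C|=l-1$ and assign each vertex a list $C\setminus D$ for a small ``hole'' $D\subseteq C$, with the holes calibrated (and of different sizes for the two part-types) so that the disjointness of the color sets used on distinct parts forces a proper coloring to use strictly more than $|C|$ colors. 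The idea is that each size-$1$ part consumes a color outright while the size-$3$ parts are made collectively deficient; calibrating the holes so that the forced total is exactly $\lceil\frac{n+k-1}{3}\rceil$—rather than the naive $k_1+2k_3$ one would get by forcing two colors on every size-$3$ part—is the delicate point, and this is precisely where the size-$1$ parts relax the bound.

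For the upper bound it is convenient to prove the slightly more general statement that every complete multipartite graph with all parts of size at most $3$ is $l$-choosable, where $n$ is the number of vertices, $k$ the number of parts, and $l=\max\{k,\lceil\frac{n+k-1}{3}\rceil\}$; the theorem is the special case with no part of size $2$, and this enlargement is what makes the induction close (it also recovers Theorem~\ref{thm:ERT} from $K_{2*k}$ and Theorem~\ref{K-th} from $K_{3*k}$ as the uniform cases). I would induct on $n$. Given an $l$-list assignment $L$, two reductions drive the argument. First, if some part contains two vertices $u,v$ with $L(u)=L(v)$, delete $v$, color the smaller graph, and set $f(v):=f(u)$; since $\lceil\frac{n+k-1}{3}\rceil$ is nondecreasing in $n$, the lists remain large enough and the part stays of size at most $3$. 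Second, if some part $P$ of size at least $2$ has a color $\gamma$ common to all its lists, color $P$ entirely with $\gamma$, delete $P$, and delete $\gamma$ from every remaining list. Here $n$ drops by at least $2$ and $k$ by $1$, so $n+k-1$ drops by at least $3$ and the target drops by at least $1$, while each surviving list loses at most the single color $\gamma$; hence the reduced lists still meet the new target and induction applies. The crucial arithmetic is that removing a part of size at least $2$ buys one full unit of slack against the ceiling, exactly enough to pay for deleting $\gamma$.

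After these reductions are exhausted we reach the irreducible case: no part of size at least $2$ has a color common to all of its lists. Coloring such a configuration directly is the main obstacle. Monochromatic reduction is unavailable, and a size-$1$ part cannot simply be deleted, because removing its vertex costs a color from other lists while the target may not drop at all—this is exactly the situation in which the ceiling term is binding. I would finish by a global selection argument: form the bipartite incidence between parts and colors and verify a Hall/system-of-distinct-representatives condition that produces pairwise-disjoint color sets realizing a proper coloring, using the hypothesis $l\ge\lceil\frac{n+k-1}{3}\rceil$ together with the no-common-color property to check the relevant inequalities. Verifying this Hall condition, and correctly tracking both terms of the maximum—the clique-dominated regime $n\le 2k+1$, where $l=k$, versus the regime where the ceiling dominates—through the counting, is where I expect the real difficulty to lie.
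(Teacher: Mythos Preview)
The paper does not prove this theorem; it is quoted from Ohba~\cite{Ohba04} without proof, so there is nothing to compare against directly. I can only assess your outline on its own merits and against the paper's treatment of the neighbouring special case $K_{3*k}$ (Theorem~\ref{K-th}).

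Your skeleton is sound. The clique lower bound $\ch\ge k$ is immediate. The second lower bound, which you describe only in outline, does in fact go through with the obvious construction: take a palette $C$ of size $p=k_1+2k_3-1$; give each singleton part the full list $C$, and for each size-$3$ part equitably partition $C=C_1\cup C_2\cup C_3$ and assign the three vertices $C\smallsetminus C_1,\,C\smallsetminus C_2,\,C\smallsetminus C_3$. Every list has size at least $\lfloor 2p/3\rfloor=\lfloor(n+k-2)/3\rfloor=\lceil(n+k-1)/3\rceil-1$, while any proper colouring would need $k_1+2k_3>p$ colours. So the ``calibration'' you flag as delicate is in fact routine here.

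Your two inductive reductions for the upper bound are correct, including the arithmetic that deleting a part of size $\ge 2$ together with a common colour drops both $k$ and the ceiling term by at least one. The generalisation to all part sizes $\le 3$ is natural and does close under these reductions.

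The gap is exactly where you say it is: the irreducible case. Your phrase ``bipartite incidence between parts and colors'' is not yet a proof, and it understates what is needed. Look at the paper's proof of Theorem~\ref{K-th}: even in the pure $K_{3*k}$ case, one does not simply check Hall's condition on a parts-versus-colours system. One first partitions the parts into reserved and unreserved families, chooses for each part a specific pair $I_X$ to merge (with different selection rules in $\mathcal U_1$ versus $\mathcal R$), builds an auxiliary SDR $f$ to guarantee the merged vertices can be represented, and only then verifies Hall's condition for the merged graph through a five-case analysis. Your proposal gives no indication of how to choose which vertices to merge in the presence of singleton parts, and this choice is the entire content of the argument. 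A correct completion along your lines would have to replicate (and extend to singletons) a construction of roughly this shape; it is not a matter of ``checking inequalities'' once the right bipartite system is written down.
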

The next example shows that the largest $s$ satisfying \eqref{qm} is at most
$14$.
\begin{example}
If $k$ is even then $\ch(K_{15*k})\geq l:=2k$: Let $G=K_{s*k}$ have parts $\{X_{1},\dots,X_{k}\}$
with $X_{i}=\{v_{i,1},\dots,v_{i,s}\}$. We will construct an $(l-1)$-list assignment
$L$ from which $G$ cannot be colored. Equitably partition $C:=[3k-1]$ into $6$
parts $C_{1},\dots,C_{6}$, and fix a bijection $f:[15]\rightarrow\binom{[6]}{2}$.
Define a list assignment $L$ for $G$ by 
\[
L(v_{i,j})=C\smallsetminus\bigcup\{C_{h}:h\in f(i)\}. 
\]
Then each list has size at least 
\[
3k-1-2\left\lceil\frac{3k-1}{6}\right\rceil=2k-1=l-1. 
\]
Consider any two colors $\alpha,\beta\in C$. Then $\alpha,\beta\in\bigcup\{C_{h}:h\in f(i)\}$
for some $i\in[15]$. So $\alpha,\beta\notin L(x_{i,j})$ for every $j\in[k]$. Thus
any $L$-coloring of $G$ uses at least three colors for every part $X_{j}$. Since
$3k>|C|$, this is impossible.
\end{example}

Yang \cite{Yang2003} proved $\lceil\frac{3k}{2}\rceil\le\ch(K_{4*k})\le\lceil\frac{7k}{4}\rceil$,
and Noel et al. \cite{Noel2013} improved the upper bound to $\lceil\frac{5k-1}{3}\rceil$.
  The main result of this paper is that \eqref{qm} holds for $s=4$. To prove this theorem 
 we first extract a simple proof of Theorem~\ref{K-th} from \cite{Noel2013}, and then
elaborate on it.

\begin{thm}
\label{M-th}$\ch(K_{4*k})=l(4,k):=\lceil\frac{3k-1}{2}\rceil$. 
\end{thm}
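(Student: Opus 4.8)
The lower bound $\ch(K_{4*k})\ge l(4,k)=\lceil\frac{3k-1}{2}\rceil$ is already supplied by the Example in the case $s=4$, so the whole content of the theorem is the reverse inequality: every $l$-list assignment $L$ of $G=K_{4*k}$ with $l=l(4,k)$ admits an $L$-coloring. The first step I would take is a reduction. Whenever a part $X_i$ has a color $\alpha$ lying in all four of its lists, color $X_i$ monochromatically with $\alpha$, delete $X_i$, and delete $\alpha$ from every remaining list; since a routine check on the ceilings (using $l(4,j)-l(4,j-1)\in\{1,2\}$) gives $l(4,k)-t\ge l(4,k-t)$ for all $t\ge0$, iterating this leaves an instance on $K_{4*m}$ with lists of size at least $l(4,m)$ --- which we may shrink to exactly $l(4,m)$ --- in which \emph{no part has a color common to all four of its lists}. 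It thus suffices to prove that every such reduced instance is $L$-colorable.

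For that I would use the standard reformulation of list-colorings of complete multipartite graphs. Since two vertices of $K_{4*m}$ are adjacent iff they lie in different parts, an $L$-coloring is exactly a choice, for each part $X_i$, of a partition of its four vertices into \emph{blocks} together with a color $c_B\in\bigcap_{v\in B}L(v)$ for every block $B$, such that the chosen colors are pairwise distinct across all blocks of all parts. By Hall's theorem the distinctness requirement becomes a condition on the partitions alone --- the family of all blocks, each weighted by its set of admissible colors, must have a system of distinct representatives. Now classify the parts: call $X_i$ of \emph{triple type} if some three of its lists $L_1,L_2,L_3,L_4$ share a color, so that $X_i$ can be split into a triple-block plus a singleton; otherwise $X_i$ is of \emph{spread type}, and then no color lies in three of the four lists, so $4l=|L_1|+|L_2|+|L_3|+|L_4|\le 2\left|\bigcup_{v\in X_i}L(v)\right|$ and $X_i$ already carries at least $2l\approx 3m$ colors. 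Note also that, since no part has a common color, the inclusion-exclusion bound $|L_1\cap L_2\cap L_3\cap L_4|\ge 4l-3N$ forces the size $N$ of the whole color universe to satisfy $N\ge\frac{4l}{3}\approx 2m$, so a universe of size about $2m$ is the extreme case.

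The engine of the proof is to choose the block-partitions --- for triple-type parts, which vertex is the singleton; for spread-type parts, how to cut the four lists into blocks of nonempty intersection (possibly three or four of them) --- so that Hall's condition holds for the resulting block family. The partitions must be chosen non-uniformly across the parts: the danger is that a block type with few admissible colors (a triple-block, or a pair-block coming from a small list-intersection) gets overrepresented relative to the colors available to it, and the remedy is to spread, across the parts, which vertices play the role of the small-option block, balancing against the singletons, which each carry a full $l$ options. The parity of the number of parts must be tracked throughout, because the bound $\lceil\frac{3k-1}{2}\rceil$ carries genuine slack only when that number is even; in the odd case the Hall inequalities are tighter, and any auxiliary step that pre-colors a part with two colors must take care to delete from the remaining lists only a color that occurs nowhere else.

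The step I expect to be the main obstacle is exactly this Hall verification in the extremal regime, where nearly every part is of triple type and the color universe has size only about $2m$ --- essentially the lower-bound Example pushed up by a single list-element. Making the counting close there, with the ceilings in the formula for $l(4,k)$ and for both parities, is where the real work lies. It is also where the paper's announced strategy pays off: $K_{3*k}$ is the same argument with parts of size three --- each part being either a common-color part or, essentially, a pair-block plus a singleton --- and the bookkeeping there is one notch simpler, so extracting that proof first and then elaborating on it is the natural route.
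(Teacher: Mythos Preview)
Your outline correctly identifies the overall shape of the argument---reduce to a minimal instance with no part having a common color, reformulate an $L$-coloring as a choice of block partition of each part together with an SDR for the intersection-lists of the blocks, then verify Hall's condition---and this is exactly the paper's framework. But what you have written is an outline, not a proof: you yourself flag that ``making the counting close there\dots is where the real work lies,'' and that work is the entire content of the theorem.

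Two concrete gaps. First, you omit the Small Pot Lemma, which the paper invokes to assume the color universe has size at most $4k-1$. Without that \emph{upper} bound on $|W(V)|$ the inclusion--exclusion estimates needed for Hall (in the paper, $|W(X)|\ge\sigma_1(X)-\sigma_2(X)+\sigma_3(X)$ combined with $\sum_{i\ge2}(i-1)n_i(X)\ge 4(l-k)+1$) do not close. Your inequality $N\ge 4l/3$ is true but points the wrong way.

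Second, and more seriously, your two-way classification of parts (``triple type'' versus ``spread type'') is far too coarse. The paper requires a \emph{seven}-way partition $\mathcal{U}_1,\mathcal{U}_2,\mathcal{U}_3,\mathcal{U}_4,\mathcal{R}_1,\mathcal{R}_2,\mathcal{R}_3$ with the totals $|\mathcal{U}|=2k-l$ and $|\mathcal{R}|=l-k$ fixed in advance, each subclass defined by threshold conditions on pair- and triple-intersection sizes ($\mu_2$, $\mu_3$, $\sigma_2-\sigma_3$) that depend on the sizes of the previously chosen subclasses. The block structure varies by class: $\mathcal{U}_1,\mathcal{R}_2$ are split $2{+}2$; $\mathcal{U}_2,\mathcal{R}_1$ are split $3{+}1$; $\mathcal{U}_3,\mathcal{R}_3$ are split $2{+}1{+}1$; $\mathcal{U}_4$ is left $1{+}1{+}1{+}1$. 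The SDR on the merged vertices is built \emph{incrementally} through four lemmas, and later subclasses are defined in terms of the colors already used by earlier ones. Your proposal to split every triple-type part as $3{+}1$ would, for instance, throw away the parts the paper puts in $\mathcal{U}_1$---those admitting a $2{+}2$ split with both halves of intersection at least $k$---and there is no evident reason Hall should hold under that cruder scheme.

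Finally, on parity: you are right that it matters, but the paper's treatment is not ``track the ceilings carefully.'' When $k$ is odd the basic merging can fall exactly one short of Hall's condition, and the paper adds an extra step that, under three separately described special circumstances, merges one additional carefully chosen pair; proving that this repair is available takes two further technical lemmas. Most of the difficulty of the paper lives in this odd-$k$ correction, and nothing in your sketch anticipates it.
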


Some of the recent development of list coloring of complete multipartite graphs
has been motivated by paintability, or on-line choosability. Introduced by Schauz \cite{Schauz09},
\emph{paintability} is a coloring game played between two players Alice and Bob on a graph $G=(V,E)$ and a function $f:V\rightarrow \mathbb N$. Let $V_{i}$ denote the vertex set at the start of
round $i$; so $V_{1}=V$. At  round $i$, Alice selects a
nonempty set of vertices $A_{i}\subseteq V_{i}$, and Bob selects an independent
set $B_{i}\subseteq A_{i}$. Then $B_{i}$ is  deleted from the graph so that $V_{i+1}=V_{i}\smallsetminus B_{i}$,
and the rounds are continued until $V_{n}=\emptyset$. Alice's goal is to present some vertex $v$ more than $f(v)$ times, while Bob's goal is to choose every vertex before it has been presented $f(v)+1$ times. 
We say that $G$ is \emph{on-line $f$-choosable} if player $B$ has a strategy
such that any vertex $v\in V$ is in at most $f(v)$ sets $A_{i}$, and \emph{on-line
$k$ choosable} if $G$ is on-line $f$-choosable when $f(v)=k$ for all $v\in V$.
The \emph{on-line choice number}, denoted $\ch^{OL}(G)$, is the least $k$ such
that $G$ is on-line $k$-choosable.

This game formulation hides the on-line nature of the problem.  Another way of thinking about it is that Alice has secretly assigned lists of colors to all the vertices.  At round $i$ she reveals all vertices whose list contains color $i$, and Bob colors an independent set of them with color $i$. In this formulation it is clear that $\ch(G)\le\ch^{OL}(G)$.

Surprisingly, Schauz \cite{Schauz09} proved that many  results on choice number, including Brooks' theorem, Thomassen's theorem,
and the Bondy-Boppana kernel lemma carry over to on-line choice number.
It is unknown whether $\ch^{OL}(G)-\ch(G)$ is bounded by a constant. Indeed, no graphs
are known for which $\ch^{OL}(G)-\ch(G)\ge2$. It is known that 
\[\ch(K_{2,2,3})=3<4={\ch}^{OL}(K_{2,2,3}).\]
The explicit value of $\ch(K_{4*k})$
provided by Theorem~\ref{M-th} may be useful for establishing larger gaps. In Section~4 we show that $\ch(K_{4*3})<{\ch}^{OL}(K_{4*3})$.

\section{Set-up}

Fix $s,k\in\mathbb{Z}^{+}$. Let $G=(V,E)=K_{s*k}$, and $\mathcal{P}$ be the partition
of $V$ into $k$ independent $s$-sets. Let $l=l(k,s)=\lceil\frac{(s-1)2k-s+2}{s}\rceil$,
and consider any $l$-list assignment $L$ for $G$.
 Put $C^*=\bigcup_{x\in V}L(x)$. 
Let $L\neg\alpha$ be the result of deleting $\alpha$ from every list of $L$. 

We may write $x_{1}\dots x_{t}$ for the subpart $S=\{x_{1},\dots,x_{t}\}\subseteq X\in\mathcal{P}$;
when we use this notation we implicitly assume the $x_{i}$ are distinct. Also set
$\overline{S}=X\smallsetminus S$. For a set of verties $S\subseteq V$ let $\mathcal{L}(S)=\{L(x):x\in S\}$,
$L(S)=\bigcap\mathcal{L}(S)$, $W(S)=\bigcup\mathcal{L}(S)$, and $l(S)=|L(S)|$.
The operation of replacing the vertices in $S$ by a new vertex $v_{S}$ with the same neighborhood as $S$ is called
\emph{merging. }The new vertex $v_{S}$ is said to be \emph{merged}; vertices that
are not merged are called \emph{original. }When merging a set $S$ we also
create a list $L(v_{S})=L(S)$.

For a color $\alpha\in C^*$, let $|X,\alpha|=|\{x\in X:\alpha\in L(x)\}|$ be the
number of times $\alpha$ appears in the lists of vertices of $X$, $N_{i}(X)=\{\alpha\in C^*:|X,\alpha|=i\}$
be the set of colors that appear exactly $i$ times in the lists of vertices in $X$,
$n_{i}(X)=|N_{i}(X)|$, and $N(X)=N_{2}(X)\cup N_{3}(X)$. Let $\sigma_{i}(X)=\sum\{l(I):I\subseteq X\wedge|I|=i\}$
and $\mu_{i}(X)=\max\{l(I):I\subseteq X\wedge|I|=i\}$.

For a set $S$ and element $x$ we use the notation $S+x=S\cup \{x\}$ and $S-x=S\smallsetminus \{x\}$.

The following lemma was proved independently by Kierstead \cite{Kie2000}, and by
Reed and Sudakov \cite{Reed02}, \cite{ReedS04}, and named by Rabern.
\begin{lem}
[Small Pot Lemma]\label{spl}If $\ch(G)>r$ then there exists a list assignment
$L$ such that $G$ has no $L$-coloring, all lists have size  $r$, and their union
has size less than $|V(G)|$. 
\end{lem}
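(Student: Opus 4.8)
The plan is to start from any bad list assignment witnessing $\ch(G) > r$ and shrink the union of the lists without destroying the property that $G$ is not $L$-colorable. So suppose $L$ is an $r$-list assignment for which $G$ has no $L$-coloring but $|C^*| = |\bigcup_{x} L(x)| \ge |V(G)| =: n$; I want to produce a new $r$-list assignment $L'$, still admitting no proper coloring, with $|\bigcup_x L'(x)| < n$. The natural move is to identify a color $\alpha$ that is ``almost useless'' and replace it everywhere it occurs, so that the color count drops by one while no new bad-coloring-avoiding obstruction is lost.

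The key step is a counting/averaging argument. Since $|C^*| \ge n$ but there are only $n$ vertices, each carrying a list of size exactly $r$, the total number of incidences $\sum_{x \in V} |L(x)| = rn$, and therefore the average number of vertices whose list contains a given color of $C^*$ is at most $rn / |C^*| \le r$. Hence some color $\alpha \in C^*$ appears in at most $r$ of the lists; fix such an $\alpha$, and let $U = \{x \in V : \alpha \in L(x)\}$, so $|U| \le r$. Now I would recolor: choose a fresh color, or better, since $|U| \le r < $ (sizes involved), argue that one can assign the vertices of $U$ pairwise-distinct colors coming from $C^* - \alpha$ in a way that is harmless. Concretely, form $L'$ by deleting $\alpha$ from every list and, for each $x \in U$, adding to $L'(x)$ a \emph{private} new color $\beta_x$ not in $C^*$ and not equal to any other $\beta_y$; then $|L'(x)| = r$ for all $x$, and $\bigcup_x L'(x) = (C^* - \alpha) \cup \{\beta_x : x \in U\}$. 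This does not obviously decrease the count.

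The correct construction instead keeps the union inside $C^*$: I would show that if $G$ has an $L'$-coloring where $L' = L \neg \alpha$ augmented on $U$ only by a single common new color — i.e. $L'(x) = (L(x) - \alpha) + \gamma$ for $x \in U$ and $L'(x) = L(x)$ otherwise, with $\gamma \notin C^*$ — then $G$ also has an $L$-coloring. Indeed, given such an $L'$-coloring $f$, the set $f^{-1}(\gamma)$ is an independent set contained in $U$, hence contained in at most one part of $\mathcal{P}$ if $|U|$ is small enough relative to the part structure, or in general is an independent set; recolor each vertex of $f^{-1}(\gamma)$ by $\alpha$ — this is legal since those vertices form an independent set and $\alpha \in L(x)$ for each $x \in U$, and no other vertex uses $\alpha$ under $f$ because $\gamma$ is the only ``slot'' where $\alpha$ was moved. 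The result is a proper $L$-coloring of $G$, contradiction. So $G$ has no $L'$-coloring, $L'$ is an $r$-list assignment, and $|\bigcup_x L'(x)| \le |C^*| - 1 + 1 = |C^*|$ — still not a strict decrease, so one more idea is needed: iterate, or observe that $\gamma$ can be chosen to \emph{reuse} a color already eliminated, driving the union strictly below $n$. Repeating this swap until the union has size less than $n$ terminates because each step either decreases $|C^*|$ or decreases the number of colors appearing more than once; the process must stop, and it can only stop with $|\bigcup_x L'(x)| < n$.

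The main obstacle is exactly this bookkeeping: making a single swap that provably decreases $|\bigcup_x L(x)|$ rather than merely keeping it constant, and verifying that the recoloring $f^{-1}(\gamma) \mapsto \alpha$ stays proper — this needs the fact that under $f$ no vertex outside $U$ is colored $\alpha$, which holds because $\alpha$ was deleted from all those lists. I expect the cleanest write-up to phrase the whole argument as: pick $\alpha$ with $|U| \le r$ as above; if $|C^*| \le n - 1$ we are done, otherwise $|U| \le r \le n$ and we can injectively map $U$ into $C^* - \alpha$ avoiding, for each $x$, the colors forbidden to $x$, but this is an SDR condition that may fail — so the robust route really is the new-color swap followed by the termination argument. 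The delicate point, and the one worth stating carefully, is why the iteration strictly decreases a well-founded quantity.
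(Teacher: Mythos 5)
There is a genuine gap, and it is exactly the one you flag at the end. Your central ``swap'' is not actually a step: since $U$ is by definition the set of \emph{all} vertices whose lists contain $\alpha$, replacing $\alpha$ by a fresh common color $\gamma$ on precisely the vertices of $U$ is nothing but an injective relabelling of the colors (exchange $\alpha$ and $\gamma$ everywhere). The new assignment is still bad only for the trivial reason that it is isomorphic to the old one, and consequently no quantity can decrease: $|C^*|$ is unchanged and $\gamma$ inherits exactly the multiplicity of $\alpha$, so your proposed termination measure (``decreases $|C^*|$ or decreases the number of colors appearing more than once'') is also unchanged. Reusing a ``previously eliminated'' color does not help either, since such a color lies outside the current pot and re-enters it, again for no net gain. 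Two symptoms confirm that the main idea is missing: the averaging fact $|U|\le r$ is never actually used, and nothing in the argument ever produces the quantity $|V(G)|$, which is the bound the lemma asserts.

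The paper does not prove the lemma but cites Kierstead and Reed--Sudakov, and the key ingredient in those proofs is the one place where ``$G$ has no $L$-coloring'' can be converted into information about the pot: a system of distinct representatives for the family $\{L(v):v\in V\}$ is a rainbow, hence proper, coloring of \emph{any} graph, so a bad assignment admits no SDR, and by Hall's theorem there is a set $A\subseteq V$ with $|W(A)|<|A|\le |V(G)|$. This is where $|V(G)|$ enters. One then takes a bad $r$-assignment minimizing $|C^*|$ and uses the set $W(A)$, of size less than $|V(G)|$, to absorb the colors outside it (eliminating a color of $C^*\smallsetminus W(A)$ contradicts minimality of the pot). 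Your write-up invokes non-colorability only through an operation that preserves it vacuously, so the approach cannot be completed as described; you need the SDR/Hall step, or some comparably substantive use of the structure of a non-colorable assignment, before any well-founded descent can get off the ground.
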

If $s$ does not satisfy \eqref{qm} then there is a minimal counterexample $k$
with $\ch(K_{s,k})>l(s,k)$. By the Small Pot Lemma, this is witnessed by a list
assignment $L$ with $|\bigcup\{L(x):x\in V(G)\}|<|V|$. We always assume $L$ has
this property.
\begin{lem}
\label{ns=00003D0}Every part $X$ of $G$ satisfies $L(X)=\emptyset$. \end{lem}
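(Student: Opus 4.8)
The plan is the standard first reduction for a minimal counterexample. Suppose toward a contradiction that some part $X$ of $G$ has $L(X)\neq\emptyset$, and fix a color $\alpha\in L(X)$, so that $\alpha\in L(x)$ for every $x\in X$. Since $X$ is an independent set, I would color every vertex of $X$ with $\alpha$ and then try to extend this to the rest of $G$. Set $G':=G-X$, which is a copy of $K_{s*(k-1)}$, and equip it with the list assignment $L':=(L\neg\alpha)|_{V(G')}$. Every vertex of $G'$ is adjacent in $G$ to all of $X$, so an $L'$-coloring of $G'$ together with the monochromatic coloring of $X$ by $\alpha$ is automatically a proper $L$-coloring of $G$: deleting $\alpha$ from the lists of $G'$ is exactly what forbids the vertices of $G'$ from reusing $\alpha$.

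It remains to produce an $L'$-coloring of $G'$, and here the only real point is arithmetic. Each list of $L'$ has size at least $l(s,k)-1$. Rewriting $l(s,k)=2k-1-\lfloor 2(k-1)/s\rfloor$, we obtain
\[
l(s,k)-l(s,k-1)=2-\bigl(\lfloor 2(k-1)/s\rfloor-\lfloor 2(k-2)/s\rfloor\bigr)\in\{1,2\},
\]
since the two floor-arguments differ by $2/s\le 1$ (we are of course in the case $s\ge 2$). In particular $l(s,k)-1\ge l(s,k-1)$, so every list of $L'$ has size at least $l(s,k-1)$.

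Finally, since $k=1$ satisfies \eqref{qm} trivially, the minimal counterexample has $k\ge 2$, hence $k-1\ge 1$, and by minimality $\ch(K_{s*(k-1)})=l(s,k-1)$. Thus $G'$ is $l(s,k-1)$-choosable, and therefore $L'$-colorable (restrict each list of $L'$ to an $l(s,k-1)$-element subset first if one wants lists of exactly that size). This produces an $L$-coloring of $G$, contradicting $\ch(G)>l(s,k)$. I do not expect any genuine obstacle here: the entire content is the one-line check that $l(s,\cdot)$ is nondecreasing in $k$ with step at least $1$, and the argument does not even use the Small Pot property of $L$ from Lemma~\ref{spl}.
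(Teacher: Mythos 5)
Your proof is correct and follows exactly the paper's argument: color the part $X$ monochromatically with $\alpha\in L(X)$, pass to $G-X=K_{s*(k-1)}$ with the lists $L\neg\alpha$, and invoke minimality of $k$ after checking $l(s,k)-1\ge l(s,k-1)$. The paper states this in two sentences without the explicit monotonicity check, which you supply correctly; nothing further is needed.
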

\begin{proof}
Otherwise there exists a list assignment $L$, a color $\alpha$, and a part $X$
such that $\alpha\in L(X)$. Color each vertex in $X$ with $\alpha$, set $G'=G-X$,
and put $L'=L\neg\alpha$. Then $L'$ witnesses that $k-1$ is a smaller counterexample, 
a contradiction.
\end{proof}
By Lemma~\ref{ns=00003D0}, $n_{s}(X)=0$ for each part $X\in\mathcal{P}$. So by
the Small Pot Lemma, $|W(X)|=\sum_{i=1}^{s-1}n_{i}(X)<sk$. Also $\sum_{i=1}^{s-1}i\, n_{i}(X)=sl$ 
is the number of occurrences of colors in the lists of vertices of $X.$ Thus 
\begin{alignat}{1}
\sum_{i=2}^{s-1}(i-1)n_{i}(X)\geq sl-|W(X)|\geq s(l-k)+1.\label{spb} 
\end{alignat}
Now we warm-up by giving a short proof extracted from \cite{Noel2013} of $ $Theorem~\ref{K-th}.
\begin{proof}
[Proof of Theorem \ref{K-th}]Let $s=3$, $l=l(3,k)$, and assume $G$ is a 
counterexample with $k$ minimal. Then $k>1$. By Lemma~\ref{ns=00003D0}, $n_{3}(X)=0$ for all $X\in\mathcal{P}$.
We obtain a contradiction by $L$-coloring $G$. First we use the following steps
to partition $V$ into sets of vertices that will receive the same color. Then we
\emph{merge} each set $I$ into a single vertex $v_{I}$, and assign $v{}_{I}$ the
set of colors in $L(I)$. Finally we apply Hall's Theorem to chose a system of distinct
representatives (SDR) for these new lists; this induces an $L$-coloring of $G$. 
\begin{enumerate}[label=\textbf{Step \arabic*.}, wide=\parindent,labelindent=0pt]
\item Partition $\mathcal{P}$ into a set  $\mathcal{R}$ of  $l-k$ \emph{reserved} parts together with a set  $\mathcal{U}=\mathcal{P}\smallsetminus\mathcal{R}$ of $2k-l$
\emph{unreserved} parts.
\item Choose $\mathcal{U}_{1}\subseteq\mathcal{U}$ maximum subject to $|\mathcal{U}_{1}|\leq\mu_{2}(X)$
for all $X\in\mathcal{U}_{1}$, and subject to this, $\nu=\sum_{X\in\mathcal{U}_{1}}\mu_{2}(X)$
is maximum. Set $u_{1}=|\mathcal{U}_{1}|$. For each $X\in\mathcal{U}_{1}$ choose
$ $a pair $I_{X}\subseteq X$ with $l(I_{X})\geq u_{1}$ maximum. Put $\mathcal{U}_{2}=\mathcal{U}\smallsetminus\mathcal{U}_{1}$
and $u_{2}=|\mathcal{U}_{2}|$. So
\begin{equation}
\mbox{if }u_{1}<2k-l\mbox{ then }\mu_{2}(X)\leq u_{1}\mbox{ for all }X\in\mathcal{U}_{2},\label{U1}
\end{equation}
since otherwise we could increase $\nu$ by adding $X$ to $\mathcal{U}_{1}$, and
deleting one part $Y\in\mathcal{U}_{1}$ with $\mu_{2}(Y)=u_{1}$, if such a part
$Y$ exists.
\item Using \eqref{spb}, each part $X\in\mathcal{P}$ satisfies 
\[
n_{2}(X)\geq3(l-k)+1\geq3\left\lceil\frac{k-1}{3}\right\rceil+1\geq k-1+1=k. 
\]
Form an SDR $f$ for $\{L(v_{I_{X}}):X\in\mathcal{U}_{1}\}\cup\{N(X):X\in\mathcal{R}\}$
by greedily choosing representatives for the first family and then for the second
family. For each $X\in\mathcal{R}$ choose a pair $I_{X}\subseteq X$ so that $f(x)\in L(I_{X})$. 
\item For each $X\in\mathcal{U}_{1}\cup\mathcal{R},$ merge $I_{X}$ to a new vertex $v_{I_X}$,
let $z_{X}\in X\smallsetminus I_{X}$, and set $X'=\{v_{I_X},z_{X}\}$. If $X\in\mathcal{U}_{2}$,
set $X'=X$. This yields a graph $G'$ with parts $\mathcal{P}'=\{X':X\in\mathcal{P}\}$,
and list assignment $L$. 
\end{enumerate}
Next we use Hall's Theorem to prove that $\{L(x):x\in V(G')\}$ has an SDR. For this it suffices to prove:
\begin{equation}
|S|\leq\left|\bigcup\{L(x):x\in S\}\right|\mbox{ for every }S\subseteq V(G').\label{HC}
\end{equation}
To prove \eqref{HC}, let $S\subseteq V(G')$ be arbitrary, and set $W=W(S):=\bigcup\{L(x):x\in S\}$.
We consider several cases in order, always assuming all previous cases fail.
\begin{enumerate}[label=\textbf{Case \arabic*:}, wide=\parindent,labelindent=0pt ]
\item There exists $X\in\mathcal{P}$ with $|S\cap X'|=3$. Then $|S|\leq2k+u_{2}$, $X'=X\in\mathcal{U}_{2}$
and $u_{2}\geq1$. Thus $u_{1}\leq2k-l-u_{2}<2k-l$, and so by \eqref{U1}, $u_{1}\geq\mu_{2}(X)\ge\sigma_{2}(X)/3$.
Using inclusion-exclusion, and Lemma~\ref{ns=00003D0}, 
\begin{eqnarray*}
|W| & \geq & |W(X)|\geq\sigma_{1}(X)-\sigma_{2}(X)+\sigma_{3}(X)\geq3l-3u_{1}=3l-3(2k-l-u_{2})\\
 & \geq & 6(l-k)+3u_{2}\geq(2k-2)+(2+u_{2})\geq2k+u_{2}\geq|S|.
\end{eqnarray*}

\item There is $X\in\mathcal{U}_{2}$ with $|S\cap X'|=2$. Then $X=X'$ and $|S|\leq2k.$
Since $u_{1}=2k-l-u_{2}<2k-l$, \eqref{U1} yields
\[
|W|\geq|W(S\cap X)|\geq2l-l(S\cap X)\geq2l-u_{1}\geq2l-(2k-l-u_{2})\geq3l+1-2k=2k\geq|S|.
\]

\item There is $X\in\mathcal{U}_{1}$ with $|S\cap X'|=2$. As $|S|\leq2k-u_{2}=l+u_{1}$
and $L(v_{I_X}z_{X})=L(X)=\emptyset$, 
\[
|W|\geq|W(S\cap X')|\geq l(v_{I_X})+l(z_{X})-l(v_{I_X}z_{X})\geq u_{1}+l\geq|S|. 
\]

\item $S$ has an original vertex. Then $|S|\leq l\leq|W|$.
\item All vertices of $S$ have been merged. Then $|S|\le|f(S)|\leq |W|$. 
\end{enumerate}
\end{proof}

\section{The main theorem}

In this section we prove our main result, Theorem~\ref{M-th}. The case when $k$
is odd is considerably more technical. Casual or first time readers may wish to avoid
these additional details; the proof is organized so that this is possible. In particular,
in the even case Step 11 and Lemmas~\ref{HC-lem} and \ref{HC'-lem} are not involved.
We often use the partition $k=(2k-l)+(l-k)$ of the integer $k$, and note that $2k-l=l-k+b$,
where $b=k\bmod2$.
\begin{proof}
[Proof of Theorem \ref{M-th}]Our set-up is the same as in the proof of Theorem~\ref{K-th}.
Let $s=4$, $l=l(4,k)$, and $G=K_{4*k}$. The theorem is trivial if $k=1$. Let
$k>1$ be a minimal counterexample, and let $L$ be an $l$-list assignment for $G$
with $|W(V)|\leq4k-1$ and $L(X)=\emptyset$ for all parts $X\in\mathcal{P}$. Again
we partition $V$ into sets of vertices that will receive the same color, and then
find an SDR for the induced list assignment that in turn induces an $L$-coloring
of $G$. See Figure \ref{figure}.

\begin{figure}
\begin{tikzpicture}[scale=.90, 
	dot/.style={fill,circle,inner sep=0pt,minimum size=7pt}]
	

\node[dot] (a) at (0.5,0.75) {};
\node[dot] (b) at (0.5,1.75) {};
\node[dot] (c) at (0.5,3.25) {};
\node[dot] (d) at (0.5,4.25) {};
\node[draw, fit=(a) (b), inner sep=1pt, ellipse] {};
\node[draw, fit=(c) (d), inner sep=1pt, ellipse] {};
\draw [dashed] (0,2.5) -- (2.5,2.5);
\draw (2.5, 0) -- (2.5, 5);


\node[dot] (e) at (3,2.25) {};
\node[dot] (f) at (3,3.25) {};
\node[dot] (g) at (3,4.25) {};
\node[dot] at (3,0.75) {};
\node[draw, fit=(e) (f) (g), inner sep=1pt, ellipse] {};
\draw [dashed] (2.5,1.5) -- (5,1.5);
\draw (5, 0) -- (5, 5);


\node[dot]  at (5.5,0.75) {};
\node[dot]  at (5.5,1.75) {};
\node[dot] (h) at (5.5,3.25) {};
\node[dot] (i) at (5.5,4.25) {};
\node[draw, fit=(h) (i), inner sep=1pt, ellipse] {};
\draw [dashed] (5,2.5) -- (7.5,2.5);
\draw (7.5, 0) -- (7.5, 5);

\draw [ultra thick] (9.5, -1) -- (9.5, 6);
\node[dot]  at (8,0.75) {};
\node[dot]  at (8,1.75) {};
\node[dot]  at (8,3.25) {};
\node[dot]  at (8,4.25) {};


\node[dot] (j) at (10,2.25) {};
\node[dot] (k) at (10,3.25) {};
\node[dot] (l) at (10,4.25) {};
\node[dot] at (10,0.75) {};
\node[draw, fit=(j) (k) (l), inner sep=1pt, ellipse] {};
\draw [dashed] (9.5,1.5) -- (12,1.5);
\draw (12, 0) -- (12, 5);


\node[dot] (m) at (12.5,0.75) {};
\node[dot] (n) at (12.5,1.75) {};
\node[dot] (o) at (12.5,3.25) {};
\node[dot] (p) at (12.5,4.25) {};
\node[draw, fit=(m) (n), inner sep=1pt, ellipse] {};
\node[draw, fit=(o) (p), inner sep=1pt, ellipse] {};
\draw [dashed] (12,2.5) -- (14.5,2.5);
\draw (14.5, 0) -- (14.5, 5);


\node[dot]  at (15,0.75) {};
\node[dot]  at (15,1.75) {};
\node[dot] (q) at (15,3.25) {};
\node[dot] (r) at (15,4.25) {};
\node[draw, fit=(q) (r), inner sep=1pt, ellipse] {};
\draw [dashed] (14.5,2.5) -- (17,2.5);

\node[draw=none,fill=white] at (1.25,-.5) {$\mathcal{U}_1$};
\node[draw=none,fill=white] at (3.75,-.5) {$\mathcal{U}_2$};
\node[draw=none,fill=white] at (6.25,-.5) {$\mathcal{U}_3$};
\node[draw=none,fill=white] at (8.5,-.5) {$\mathcal{U}_4$};
\node[draw=none,fill=white] at (10.75,-.5) {$\mathcal{R}_1$};
\node[draw=none,fill=white] at (13.25,-.5) {$\mathcal{R}_2$};
\node[draw=none,fill=white] at (15.75,-.5) {$\mathcal{R}_3$};

\node[draw=none,fill=white] at (1.25,5.5) {$u_1$};
\node[draw=none,fill=white] at (3.75,5.5) {$u_2$};
\node[draw=none,fill=white] at (6.25,5.5) {$u_3$};
\node[draw=none,fill=white] at (8.5,5.5) {$u_4$};
\node[draw=none,fill=white] at (10.75,5.5) {$r_1$};
\node[draw=none,fill=white] at (13.25,5.5) {$r_2$};
\node[draw=none,fill=white] at (15.75,5.5) {$r_3$};

\node[draw=none,fill=white] at (4.75,6) {$2k-l$};
\node[draw=none,fill=white] at (13.25,6) {$l-k$};

\begin{pgfonlayer}{bg}
	\path [fill=gray!30] (0,0) rectangle (2.5,5);
	\path [fill=gray!30] (2.5,1.5) rectangle (5,5);
	\path [fill=gray!30] (5,2.5) rectangle (7.5,5);
	\path [fill=gray!30] (9.5,1.5) rectangle (12,5);
	\path [fill=gray!30] (12,0) rectangle (14.5,5);
	\path [fill=gray!30] (14.5,2.5) rectangle (17,5);
	\draw (0,0) rectangle (17,5);

	
	\draw [arrows={|<->|}] (0,5.5) -- (2.5,5.5);
	\draw [arrows={|<->|}] (2.5,5.5) -- (5,5.5);
	\draw [arrows={|<->|}] (5,5.5) -- (7.5,5.5);
	\draw [arrows={|<->|}] (7.5,5.5) -- (9.5,5.5);
	\draw [arrows={|<->|}] (9.5,5.5) -- (12,5.5);
	\draw [arrows={|<->|}] (12,5.5) -- (14.5,5.5);
	\draw [arrows={|<->|}] (14.5,5.5) -- (17,5.5);
	
	\draw [arrows={|<->|}] (0,6) -- (9.5,6);
	\draw [arrows={|<->|}] (9.5,6) -- (17,6);
\end{pgfonlayer}
\end{tikzpicture}
\caption{The partition $\mathcal P'$ of $K_{4*k}$.\label{figure}}
\end{figure}
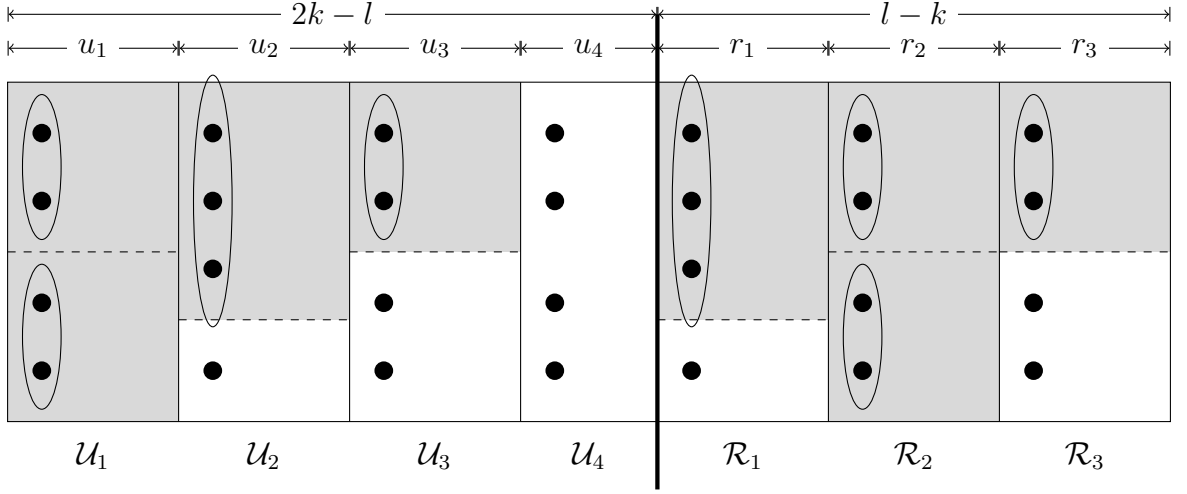

\begin{enumerate}[label=\textbf{Step \arabic*.},ref=Step \arabic*, wide=\parindent,labelindent=0pt]
\item Partition $V$ as $\mathcal{P}=\mathcal{U}\cup\mathcal{R}$, where $|\mathcal{R}|=l-k$,
$|\mathcal{U}|=2k-l$, $\mathcal{R}=\mathcal{R}_{1}\cup\mathcal{R}_{2}\cup\mathcal{R}_{3}$
and $\mathcal{U}=\mathcal{U}_{1}\cup\mathcal{U}_{2}\cup\mathcal{U}_{3}\cup\mathcal{U}_{4}$
as follows.
\item Choose $\mathcal{U}_{1}\subseteq\mathcal{P}$ maximum subject to $|\mathcal{U}_{1}|\leq2k-l$
and for every $X\in\mathcal{U}_{1}$ there is a pair $I_{X}\subseteq X$ with $l(I_{X}),l(\overline{I}_{X})\geq k$.
Put $\mathcal{U}_{1}\subseteq\mathcal{U}$, and let $u_{1}:=|\mathcal{U}_{1}|$.
Then: 
\begin{equation}
\mbox{If }u_{1}<2k-l\mbox{ then }(\forall X\in\mathcal{P}\smallsetminus\mathcal{U}_{1})(\forall I\subseteq X)[|I|=2\rightarrow \min\{l(I),l(\overline{I})\}\leq k-1].\label{U-1-eq} 
\end{equation}

\item Choose $\mathcal{U}_{2}\subseteq\mathcal{P}\smallsetminus\mathcal{U}_{1}$ maximum
subject to $|\mathcal{U}_{2}|\leq2k-l-u_{1}$ and $|\mathcal{U}_{2}|\leq\mu_{3}(X)$
for all $X\in\mathcal{U}_{2}$; subject to this let $\nu=\sum_{X\in\mathcal{U}_{2}}\mu_{3}(X)$
be maximum. Put $\mathcal{U}_{2}\subseteq\mathcal{U}$, and let $u_{2}=|\mathcal{U}_{2}|$.
If $\mathcal{U}_{2}\ne\emptyset$ then let $\dot{Z}\in\mathcal{U}_{2}$; else $\dot{Z}=\emptyset$.
For each $X\in\mathcal{U}_{2}$ choose a triple $I_{X}\subseteq X$ with $l(I_{X})\geq u_{2}$
maximum. Since $\nu$ cannot be increased: 
\begin{equation}
\mbox{If }u_{1}+u_{2}<2k-l\mbox{ then }(\forall X\in\mathcal{U}_{3}\cup\mathcal{U}_{4}\cup\mathcal{R})[\mu_{3}(X)\leq u_{2}].\label{U2}
\end{equation}

\item Choose $\mathcal{R}_{1}\subseteq\mathcal{P}\smallsetminus(\mathcal{U}_{1}\cup\mathcal{U}_{2})$
maximum subject to $|\mathcal{R}_{1}|\leq l-k$ and for all $X\in\mathcal{R}_{1}$
there exists $I_{X}\subseteq X$ with $|I_{X}|=3$ such that there is an SDR $f_{1}$
of $\mathcal{L}(M_{1})$, where $M_{1}:=\{v_{I_{X}}:X\in\mathcal{U}_{2}\cup\mathcal{R}_{1}\}$;
let $C_{1}=\ra(f_{1})$. Put $\mathcal{R}_{1}\subseteq\mathcal{R}$, and let $r_{1}:=|\mathcal{R}_{1}|$.
Then:
\begin{equation}
\mbox{If }r_{1}<l-k\mbox{ then }(\forall X\in\mathcal{U}_{3}\cup\mathcal{U}_{4}\cup\mathcal{R}_{2}\cup\mathcal{R}_{3})[N_{3}(X)\subseteq C_{1}].\label{R1}
\end{equation}

\item Choose $\mathcal{U}_{3}\subseteq\mathcal{P}\smallsetminus(\mathcal{U}_{1}\cup\mathcal{U}_{2}\cup\mathcal{R}_{1})$
maximum subject to $|\mathcal{U}_{3}|\leq2k-l-u_{1}-u_{2}$ and $l-k+u_{2}+|\mathcal{U}_{3}|\leq\mu_{2}(X)$
for all $X\in\mathcal{U}_{3}$; subject to this let $\nu=\sum_{X\in\mathcal{U}_{3}}\mu_{2}(X)$
be maximum. Put $\mathcal{U}_{3}\subseteq\mathcal{U}$, and $u_{3}=|\mathcal{U}_{3}|$.
Since $\nu$ cannot be increased: 
\begin{equation}
\mbox{If }u_{1}+u_{2}+u_{3}<2k-l\mbox{ then }(\forall X\in\mathcal{U}_{4}\cup\mathcal{R}_{2}\cup\mathcal{R}_{3})[\mu_{2}(X)\leq l-k+u_{2}+u_{3}].\label{U3}
\end{equation}

For all $X\in\mathcal{U}_{3}$ choose a pair $I_{X}=xy\subseteq X$ with $l(I_{X})\ge l-k+u_{2}+u_{3}$
maximum; subject to this choose $I_{X}$ so that $\Delta_{1}(I_{X}):=l(I_{X})-l(\overline{I}_{X})$
is maximum.
 Set  $\Delta_{2}(I_{X}):=2u_{2}-l(xyz)-l(xyw)$
, where $zw=\overline{I}_{X}$. $ $ Using $r_1\leq l-k$,
extend $f_{1}$ to an SDR $f_{2}$ of $\mathcal{L}(M_{2})$, where $M_{2}:=M_{1}\cup\{v_{I_{X}}:X\in\mathcal{U}_{3}\}$;
set $C_{2}=\ra(f_{2})$. 

\item \begin{flushleft}
Choose $\mathcal{R}_{2}\subseteq\mathcal{P}\smallsetminus(\mathcal{U}_{1}\cup\mathcal{U}_{2}\cup\mathcal{U}_{3}\cup\mathcal{R}_{1})$
maximum subject to $|\mathcal{R}_{2}|\leq l-k-r_{1}$ and $\sigma_{2}(X)-\sigma_{3}(X)\ge5(l-k)+2u_{1}+2u_{2}+u_{3}+r_{1}+|\mathcal{R}_{2}|$
for all $X\in\mathcal{R}_{2}$; subject to this let $\sum_{X\in\mathcal{R}_{2}}\sigma_{2}(X)-\sigma_{3}(X)$
be maximum. Put $\mathcal{R}_{2}\subseteq\mathcal{R}$, and set $r_{2}=|\mathcal{R}_{2}|$.
Then:
\begin{flalign}
\mbox{If }r_{1}+r_{2} & <l-k\mbox{ then }(\forall X\in\mathcal{U}_{4}\cup\mathcal{R}_{3})\nonumber \\
 & [\sigma_{2}(X)-\sigma_{3}(X)\leq5(l-k)+2u_{1}+2u_{2}+u_{3}+r_{1}+r_{2}].\label{R2}
\end{flalign}

\par\end{flushleft}
\item Choose $\mathcal{R}_{3}\subseteq\mathcal{P}\smallsetminus(\mathcal{U}_{1}\cup\mathcal{U}_{2}\cup\mathcal{U}_{3}\cup\mathcal{R}_{1}\cup\mathcal{R}_{2})$
with $|\mathcal{R}_{3}|=l-k-r_{1}-r_{2}$, and set $\mathcal{R}=\mathcal{R}_{1}\cup\mathcal{R}_{2}\cup\mathcal{R}_{3}$.
Let $r_{3}=|\mathcal{R}_{3}|$. For $I\subseteq X$, put $L'(I)=L(I)\smallsetminus C_{2}$
and $l'(I)=|L'(I)|$. Using Lemma~\ref{R3-L}, for all $X\in\mathcal{R}_{3}$ there
exists a pair $I_{X}\subseteq X$ with $l'(\overline{I}_{X})\leq l'(I_{X})$ such
that $f_{2}$ can be extended to an SDR $f_{3}$ of $\mathcal{L}(M_{3})$, where
$M_{3}:=M_{2}\cup\{v_{I_{X}}:X\in\mathcal{R}_{3}\}$. Let $C_{3}=\ra(f_{3})$. 
\item Put $\mathcal{U=}\mathcal{P}$$\smallsetminus\mathcal{R}$, $\mathcal{U}_{4}:=\mathcal{U}\smallsetminus(\mathcal{U}_{1}\cup\mathcal{U}_{2}\cup\mathcal{U}_{3})$,
and $u_{4}:=|\mathcal{U}_{4}|$.
\item Using Lemma~\ref{R2-L}, choose a pair $I_{X}\subseteq X$ for all $X\in\mathcal{R}_{2}$
so that $\mathcal{L}(M_{4})$ has an SDR $f_{4}$ extending $f_{3}$, where $M_{4}:=M_{3}\cup\{v_{I_{X}},v_{\overline{I}_{X}}:X\in\mathcal{U}_{1}\cup\mathcal{R}_{2}\}$.
\item Let $G':=(V',E')$ be the graph obtained from $G$ by merging each $I_{X}$ with
$X\in\mathcal{U}_{2}\cup\mathcal{U}_{3}\cup\mathcal{U}_{1}\cup\mathcal{R}_{1}\cup\mathcal{R}_{2}\cup\mathcal{R}_{3}$
 and each $\overline{I}_{X}$ with $X\in\mathcal{U}_{1}\cup\mathcal{R}_{2}$. For a part $X$, let $X'$ be the corresponding part in
$G'$, and set $\mathcal{P}'=\{X':X\in\mathcal{P}\}$. 
\item \label{extra-Step} Set $0=\dot u=\dot r=\ddot u$. If $k$ is odd ($b=1$) then
we merge one more pair of vertices under any of the following special circumstances: 

\begin{enumerate}
\item there exists $X\in\mathcal{U}_{4}$ with $|W(X)|<|G'|$. Fix
such an $X=\dot{X}$. By Lemma~\ref{HC-lem}, $r_3=0$ and there is a pair $\dot{I}\subseteq\dot{X}$
such that (i) $f_{4}$ can be extended to an $SDR$ $f$ of $\mathcal{L}(M)$, where
$M:=M_{4}+v_{\dot{I}}$; (ii) $|W(\{v_{\dot{I}},v\})|\geq2k-1$, and if equality
holds then $|W(\{v_{\dot{I}},v\}\cup \dot{Z}')\cup C_4|\geq2k$ for both $v\in\overline{\dot{I}}$;
and (iii) $W(\overline{\dot{I}}+v_{\dot{I}})\geq|G'|-1$. Merge $\dot{I}$ and set 
$\dot{u}=1$.
\item $u_{1}=r_{2}=0$ and there is $ $$Y\in\mathcal{R}_{3}$ with $|W(Y)|\leq3k-1-u_{2}-r_{1}$. Then (a) fails since $r_3\ge 1$. 
Fix such a $Y=\dot{Y}$. As $u_{1}=0=r_{2}$, $M_{4}=M_{3}$. Since $r_3\ne0$, (a) is not executed. By Lemma~\ref{R3-L},
$f_{3}$ can be chosen so that it is an SDR $ $ of $\mathcal{L}(M)$, where $M:=M_{4}+v_{\overline{I}_{\dot{Y}}}$.
Merge $\overline{I}_{\dot{Y}}$ and set
$\dot{r}=1$.
\item condition (a) fails and there exist $X\in\mathcal{U}_{4}$ and $xyz\subseteq X$
with 
\[
|W(xyz\cup\dot Z')|\leq2k+u_{4}-1<|W(X)|.
\]
Fix such an $X=xyzw=\ddot{X}$. By Lemma~\ref{HC'-lem} there is a pair $\ddot{I}\subseteq xyz$
such that (i) $f_{4}$ can be extended to an SDR $f$ of $\mathcal{L}(M)$, where
$M:=M_{4}+v_{\ddot{I}}$; (ii) $|W(\{v_{\ddot{I}},v\})|\geq2k$ for $v\in xyz\smallsetminus\ddot{I}$
and $|W(\{v_{\ddot{I}},w\})|\geq2k-1$; and (iii) $|W(\overline{\ddot{I}}+v_{\ddot{I}})|\geq2k+u_{4}$.
Merge $I_{\ddot{X}}:=\ddot{I}$ and  set 
$\ddot{u}=1$.
\end{enumerate}

\item Recall that $G'$ is the graph obtained after the first ten steps. Let $H$ be the
final graph obtained by this merging procedure. (If $b=0$, and possibly otherwise,
$H=G'$). Also let $M$ be the final set of merged vertices, $f$ be the final SDR
of $\mathcal{L}(M)$, and $C=\ra(f)$. 
\end{enumerate}

Our next task is to state and prove the four lemmas on which the algorithm is based. We will need the following easy claim.

\begin{claim}\label{cl}
Let  $\mathcal P_1,\mathcal P_2,\mathcal P_3$ be the three partitions of a $4$-set   $X$ into pairs. For all  $I_1\in \mathcal P_1,I_2\in \mathcal P_2,I_3\in \mathcal P_3$ there exists  $v\in X$ such that either (i)  $v\in I_1\cap I_2\cap I_3$ or (ii)  $v\notin I_1 \cup I_2\cup I_3$.
\end{claim}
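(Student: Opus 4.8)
The plan is to reduce everything to one ``crossing'' observation about pair-partitions of a $4$-set and then run a short case split. First I would record the following fact: if $\mathcal{P}_i\ne\mathcal{P}_j$ are two of the three pair-partitions of $X$, and $I_i\in\mathcal{P}_i$, $I_j\in\mathcal{P}_j$, then $|I_i\cap I_j|=1$. To see this, note $|I_i\cap I_j|\in\{0,1,2\}$; if it were $2$ then $I_i=I_j$, and if it were $0$ then $I_j=X\smallsetminus I_i$, so in either case $I_j$ --- or, equivalently, its complement --- would be a block of $\mathcal{P}_i$ as well as of $\mathcal{P}_j$. But a pair-partition of a $4$-set is determined by either one of its two blocks, so $\mathcal{P}_i$ and $\mathcal{P}_j$ would coincide, contradicting the hypothesis. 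This is the only place the pairwise distinctness of the three partitions is used.

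Granting this crossing fact, I would apply it to $\mathcal{P}_1$ and $\mathcal{P}_2$ to obtain a vertex $v$ with $I_1\cap I_2=\{v\}$, and write $X=\{v,p,q,r\}$ with $I_1=\{v,p\}$ and $I_2=\{v,q\}$; here $p\ne q$ (since $|I_1\cap I_2|=1$) and $r$ is the remaining vertex. If $v\in I_3$, then $v\in I_1\cap I_2\cap I_3$, which is alternative (i). Otherwise $v\notin I_3$; applying the crossing fact to $\mathcal{P}_1,\mathcal{P}_3$ and to $\mathcal{P}_2,\mathcal{P}_3$ gives $|I_1\cap I_3|=|I_2\cap I_3|=1$, and since $v\notin I_3$ this forces $p\in I_3$ and $q\in I_3$, hence $I_3=\{p,q\}$. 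Then $r$ lies in none of $I_1$, $I_2$, $I_3$, so alternative (ii) holds with $v=r$.

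I do not expect any genuine obstacle here --- the claim really is elementary, as the authors indicate --- so the ``hard part'' is just packaging: isolating the crossing lemma so that the main case split becomes mechanical, and observing that it is exactly the distinctness of $\mathcal{P}_1,\mathcal{P}_2,\mathcal{P}_3$ that makes it true (the statement would fail if two of the partitions were allowed to coincide). As an alternative I could instead fix coordinates, write the three partitions as $\{12,34\}$, $\{13,24\}$, $\{14,23\}$, and simply verify the assertion for each of the $2^3=8$ triples $(I_1,I_2,I_3)$; this is brute but completely safe. I would present the crossing-lemma version, since it is shorter and coordinate-free.
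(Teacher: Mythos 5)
Your proof is correct. The paper itself offers no argument for this claim---it is simply labelled ``easy'' and used later in Lemmas~\ref{R3-L} and \ref{R2-L}---so there is no authorial proof to compare against; your crossing observation (two pairs from distinct pair-partitions of a $4$-set meet in exactly one element, since a pair-partition is determined by either of its blocks) together with the short case split on whether $v\in I_3$ is a complete and clean verification, and the $2^3$-case brute-force check you mention would also suffice.
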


\begin{lem}
\label{R3-L}There is a family $\mathcal{I=}\{I_{X}:X\in\mathcal{R}_{3}\}$ such
that $I_{X}\subseteq X$, $|I_{X}|=2$, $l'(I_{X})\geq l'(\overline{I}_{X})$, and
$\mathcal{L}(M_{2}\cup\{v_{I_{X}}:X\in\mathcal{R}_{3}\})$ has an SDR $f_{3}$ extending
$f_{2}$. 

Furthermore, if  $u_{1}=0=r_{2}$ and there is $\dot Y\in\mathcal{R}_{3}$ with $|W(\dot Y)|\leq3k-1-u_{2}-r_{1}$,
 then $I_{\dot{Y}}$
can be chosen so that there is an SDR $f$ of $\mathcal{L}(M)$ extending $f_{2}$,
where $M=M_{3}+v_{\overline{I}_{\dot{Y}}}$. \end{lem}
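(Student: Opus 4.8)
The plan is to reduce the statement to a single application of Hall's theorem. Since $f_{2}$ is a fixed SDR of $\mathcal L(M_{2})$ with $\ra(f_{2})=C_{2}$, extending it to an SDR $f_{3}$ of $\mathcal L(M_{3})$ amounts to choosing, for each $X\in\mathcal R_{3}$, a pair $I_{X}\subseteq X$ together with a colour of $L'(I_{X})=L(I_{X})\smallsetminus C_{2}$, all the chosen colours being distinct. We may assume $\mathcal R_{3}\ne\emptyset$, so $r_{1}+r_{2}<l-k$, hence $r_{1}<l-k$; then \eqref{R1} gives $N_{3}(X)\subseteq C_{1}\subseteq C_{2}$ for every $X\in\mathcal R_{3}$. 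This is the structural point: in the deleted lists $L'(x)=L(x)\smallsetminus C_{2}$ with $x\in X$, every surviving colour lies in at most two of the four lists. Writing $n_{2}'(X)$ for the number of colours surviving in exactly two of them, the sum of $l'(I)$ over the six pairs $I\subseteq X$ equals $n_{2}'(X)$; hence the heaviest pair $I_{X}$ (one maximising $l'(I_{X})$) automatically satisfies $l'(\overline{I}_{X})\le l'(I_{X})$, and the set $A(X):=\bigcup\{L'(I):I\subseteq X,\ |I|=2,\ l'(I)\ge l'(\overline{I})\}$ of colours available to $X$ satisfies $|A(X)|\ge n_{2}'(X)/2$, because the heavier half of each of the three complementary pairs lies in $A(X)$ and these three contributions are pairwise disjoint.

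Next I would bound $n_{2}'(X)$ from below for $X\in\mathcal R_{3}$. Inclusion--exclusion and $L(X)=\emptyset$ give $n_{2}(X)+2n_{3}(X)=4l-|W(X)|$; the Small Pot Lemma gives $|W(X)|\le 4k-1$; and $n_{3}(X)$ is bounded, via $N_{3}(X)\subseteq C_{1}$, by $|C_{1}|=u_{2}+r_{1}$, or, when $u_{1}+u_{2}<2k-l$, more sharply by $\mu_{3}(X)\le u_{2}$ from \eqref{U2} (the case $u_{1}+u_{2}=2k-l$ forces $u_{3}=u_{4}=0$ and is treated separately). Since $|C_{2}|=u_{2}+u_{3}+r_{1}$ and $n_{2}'(X)\ge n_{2}(X)-|C_{2}|$, these combine into a lower bound on $n_{2}'(X)$, hence on $|A(X)|$. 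To verify Hall's condition for $\{A(X):X\in\mathcal R_{3}\}$ I would consider a minimal violator $\mathcal S$, for which $|\bigcup_{X\in\mathcal S}A(X)|=|\mathcal S|-1<r_{3}$; then every $A(X)$ with $X\in\mathcal S$ lies inside a single $C_{2}$-free set of size less than $r_{3}$, which squeezes $n_{2}'(X)$, and hence $|W(X)|$, into a narrow window for all $X\in\mathcal S$. Comparing this with the global budget $|W(V)|\le 4k-1$ (the colours of $\bigcup_{X\in\mathcal S}A(X)$ and those of $C_{2}$ are disjoint) and with the maximality inequality \eqref{R2}, which also bounds $|W(X)|$ for $X\in\mathcal R_{3}$, should yield a numerical contradiction, using $k=(2k-l)+(l-k)$ and $b=k\bmod 2$. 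An SDR $c$ of $\{A(X)\}$ then gives, for each $X$, a qualifying pair $I_{X}$ with $c(X)\in L'(I_{X})$, and $f_{3}:=f_{2}\cup c$ proves the first assertion.

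For the ``Furthermore'' clause, the hypotheses $u_{1}=0=r_{2}$ are precisely what make $M_{4}=M_{3}$, so that the extra vertex $v_{\overline{I}_{\dot{Y}}}$ does not clash with anything merged in Steps 9--10. The hypothesis $|W(\dot{Y})|\le 3k-1-u_{2}-r_{1}$, fed into $n_{2}(\dot{Y})+2n_{3}(\dot{Y})=4l-|W(\dot{Y})|$ together with $n_{3}(\dot{Y})\le u_{2}+r_{1}$ and $|C_{2}|=u_{2}+u_{3}+r_{1}$, makes $n_{2}'(\dot{Y})$ large enough that one can select a complementary pair $\{I_{\dot{Y}},\overline{I}_{\dot{Y}}\}$ of $\dot{Y}$ both of whose halves carry enough $C_{2}$-free colours; one then reruns the Hall argument for the enlarged family $\{A(X):X\in\mathcal R_{3},\ X\ne\dot{Y}\}\cup\{L'(I_{\dot{Y}}),L'(\overline{I}_{\dot{Y}})\}$, the two new sets being disjoint and each large enough that no new subfamily can violate Hall.

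The main obstacle is the Hall verification for large subfamilies $\mathcal S\subseteq\mathcal R_{3}$: the per-part estimate $|A(X)|\ge n_{2}'(X)/2$ does not by itself exceed $|\mathcal S|$ once $|\mathcal S|$ is close to $r_{3}$, so one must invoke the global count $|W(V)|\le 4k-1$ and the extremal property of $\mathcal R_{2}$ in \eqref{R2} together, and turning the resulting inequalities into a genuine contradiction rather than an off-by-one near miss is where the bookkeeping with $l$, $k$, $b$ and the block sizes $u_{i},r_{i}$ has to be done most carefully.
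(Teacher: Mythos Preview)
For the first assertion your setup coincides with the paper's: your $A(X)$ is exactly the paper's $B(X)$, and your bound $|A(X)|\ge\lceil n_2'(X)/2\rceil$ is correct (the six sets $L'(I)$ are pairwise disjoint, as you note). The gap is that you stop one step short and then propose an unnecessary detour. Your own ingredients already yield $n_2'(X)\ge 2r_3-1$: since $N_3(X)\subseteq C_2$, one has $n_2'(X)=|(N_2(X)\cup N_3(X))\smallsetminus C_2|\ge n_2(X)+n_3(X)-|C_2|\ge(2k-1)-(u_2+r_1)-(u_2+u_3+r_1)$, and the block identities $\sum u_i=2k-l$, $\sum r_i=l-k$ turn this into $2r_3-1+(2u_1+u_3+2u_4+2r_2)\ge 2r_3-1$. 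Hence every $|A(X)|\ge r_3$, Hall's condition is trivial, and the SDR is greedy; the minimal-violator argument, the global bound $|W(V)|\le 4k-1$, and \eqref{R2} are not needed, and the paper invokes none of them here. (Your aside that $n_3(X)$ is bounded ``more sharply'' by $\mu_3(X)\le u_2$ is backwards: $\mu_3(X)\le\sigma_3(X)=n_3(X)$, not the other way.)

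For the ``furthermore'' clause there is a genuine gap. You assert that the stronger hypothesis on $|W(\dot Y)|$ makes $n_2'(\dot Y)$ large enough to ``select a complementary pair both of whose halves carry enough $C_2$-free colours'', but this existence is the crux and is not automatic from a count: the surviving colours could sit on pairs that do not constitute any single partition of $\dot Y$. The paper proves it via a short pigeonhole claim about transversals of the three pair-partitions of a $4$-set, which reduces ``no good partition'' to the dichotomy $L(w)\cap A(\dot Y)=\emptyset$ or $A(\dot Y)\subseteq L(w)$ for some $w\in\dot Y$; each branch then contradicts $|A(\dot Y)|\ge 2k+r_3-1$ together with $|W(\dot Y)|\le 3k-1-u_2-r_1$. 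A second issue you wave past is interference with the rest of $\mathcal R_3$: even a good partition of $\dot Y$ may have $L'(I_{\dot Y})\cup L'(\overline{I}_{\dot Y})$ entirely contained in some other $B(Z)$ of size exactly $r_3$, blocking the enlarged SDR; the paper handles this with a further ``weak/strong'' case analysis. Your line ``each large enough that no new subfamily can violate Hall'' addresses neither point.
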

\begin{proof}
Consider any $X\in\mathcal{R}_{3}$, and let $A(X)=N_{2}(X)\smallsetminus C_{2}$
be the set of colors available for coloring a pair of vertices from $X$. Then $L'(I)=L(I)\cap A(X)$
for all pairs $I\subseteq X$. For each color $\alpha\in A$, set $I(\alpha)=\{x\in X:\alpha\in L(x)\}$.
As $A(X)\subseteq N_{2}(X)$, $|I(\alpha)|=2$. Let $B(X)=\{\alpha\in A(X):l'(I(\alpha))\geq l'(\overline{I}(\alpha))\}$.
For the first part, it suffices to show that $\mathcal{B}=\{B(Z):Z\in\mathcal{R}_{3}\}$
has an SDR $g$: for each $X\in\mathcal{R}_{3}$ set $I_{X}=I(\alpha)$, and $f(v_{I_{X}})=\alpha$,
where $\alpha=g(B(X))$. 

By \eqref{R1}, $N_{3}(X)\subseteq C_{1}\subseteq C_{2}$; so $n_{3}(X)\leq u_{2}+r_{1}$.
By \eqref{spb} 
\begin{equation}
n_{2}(X)+2n_{3}(X)\geq4l-|W(X)|\geq4(l-k)+1\geq2k-1.\label{n2+2n3-eq}
\end{equation}
Thus 
\begin{eqnarray}
|A(X)| & = & n_{2}(X)+n_{3}(X)-|C_{2}|\geq n_{2}(X)+2n_{3}(X)-n_{3}(X)-|C_{2}|\label{A(X)-eq:}\\
 & \geq & 2k-1-(2u_{2}+u_{3}+2r_{1})\geq2r_{3}-1.\nonumber 
\end{eqnarray}
If $\alpha\in A(X)\smallsetminus B(X)$ then $A(X)\cap L(\overline{I}(\alpha))\subseteq B(X)$.
So $|B(X)|\geq\lceil|A(X)|/2\rceil\geq r_{3}.$ Hence $\mathcal B$ 
has an SDR $g$. 

Now suppose $\dot{Y}$ is defined in Step 11(b). Then $b=1$, $u_{1}=r_{2}=0$, and
$|W(\dot{Y})|\leq3k-1-u_{2}-r_{1}$. As $b=1$, $k$ is odd; so $k\ge3$. If $r_{3}\geq2$ then fix $Z\in\mathcal{R}_{3}-\dot{Y}$.
A partition $\mathcal{Q}=\{I$,$\overline{I}\}$ of $\dot{Y}$ into pairs is \emph{bad}
if $l'(I)=0$ or $l'(\overline{I})=0$; else it is good. It is \emph{weak} if $r_{3}\geq2$,
$L'(I)\cup L'(\overline{I})\subseteq B(Z)$ and $|B(Z)|=r_{3}$; else it is strong. 

For the second part, it suffices to show that $\dot{Y}$ has a good, strong partition: If
$\{\dot{I},\overline{\dot{I}}\}$ is a  good, strong partition then choose 
$\alpha,\beta\in L'(I)\cup L'(\overline{I})$ with $|B(Z)-\alpha|\geq r_{3}$
and $\alpha\in L'(I)\mbox{ iff }\beta\in L'(\overline{I})$. Then $\alpha$ and $\beta$
are the representatives for $L'(I)$ and $L'(\overline{I})$, or \emph{vice versa}.
We are done if  $r_3 =1$. If $r_{3}\geq2$ then continue by greedily choosing an SDR of $\mathcal{B}-B(\dot{Y)}-B(Z)+(B(Z)-\alpha)+L'(I)+L'(\overline{I})$
by picking representatives for $\mathcal{B}-B(\dot{Y})-B(Z)$, and finally
picking a representative for $B(Z)-\alpha$. 

Using the first half of \eqref{n2+2n3-eq}, 
\[
n_{2}(\dot{Y})+2n_{3}(\dot{Y})\geq4l-|W(\dot{Y})|\geq2l+u_{2}+r_{1}.
\]
So by \eqref{A(X)-eq:}, 
\[
|A(\dot{Y})|\geq2l+u_{2}+r_{1}-(2u_{2}+u_{3}+2r_{1})\geq2l-u_{2}-u_{3}-r_{1}\geq2k+r_{3}-1.
\]

First suppose for a contradiction that $\dot{Y}$ has no good partition. For 
each partition $\mathcal P$ of $X$ into pairs, choose $I\in \mathcal P$ with  $L(I)\cap A(\dot Y)=\emptyset$. Using Claim~\ref{cl}, there
exists $w\in\dot{Y}$ such that either (i) $L(wx)\cap A(\dot{Y})=\emptyset$ for
all $x\in\dot{Y}\smallsetminus w$ or (ii) $L(xy)\cap A(\dot{Y})=\emptyset$ for
all $xy\subseteq\dot{Y}\smallsetminus w$. If (i) holds then $L(w)\cap A(\dot Y)=\emptyset$. This yields the contradiction
\[
l+2k+r_{3}-1\leq l(w)+|A(\dot{Y})|\leq|W(\dot{Y})|\leq3k-1-u_{2}-r_{1}<l+2k-1.
\]
If (ii) holds then $A(\dot Y)\subseteq L(w)$, and so 
 $l<|A(\dot{Y})|\leq l(w)$, another contradiction.

So $\dot{Y}$ has a good partition (say) $\mathcal{Q}_{1}=\{xy,zw\}$. 
Suppose $\mathcal{Q}_{1}$ is weak. Then   $r_3\ge 2$ and $|A_{0}|\geq2k-1$, where  $A_{0}:=A(\dot{Y})\smallsetminus B(Z)\subseteq A(\dot{Y})\smallsetminus (L'(xy)\cup L'(zw))$. The former implies $2\le r_3\le l-k\le2k-l$;  so (*)  $l\le2k-2$. If the other two partitions of $\dot{Y}$ are both bad then
there is $v\in\dot{Y}$ with $A_{0}\subseteq L(v)$. So $2k-1\le|A_0|\leq l$
contradicting (*). 
 Say $\mathcal{Q}_{2}=\{xw,yz\}$ is good. If $\mathcal{Q}_{2}$
is weak then $A_{0}\subseteq A(\dot{Y})\smallsetminus (L'(xy)\cup L'(zw)\cup L'(xw)\cup L'(yz))$. Then $|L'(xz)\cup L'(yw)|\geq2k-1$. 
So $\mathcal{Q}_{3}=\{xz,yw\}$ is strong. By (*), $l'(xz),l'(yw)\leq l<2k-1$. Thus $l'(xz),l'(yw)\ge1$, and so $\mathcal{Q}_{3}$
is also good. 
\end{proof}
\begin{lem}
\label{R2-L}For each $X\in\mathcal{R}_{2}$ there is a pair $I_{X}\subseteq X$
such that $\{L(I_{X}):X\in\mathcal{P}\smallsetminus\mathcal{U}_{4}\}\cup\{L(\overline{I}_{X}):X\in\mathcal{U}_{1}\cup\mathcal{R}_{2}\}$
has an SDR $f_{4}$ that extends $f_{3}$.\end{lem}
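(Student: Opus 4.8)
The plan is to mimic the argument of Lemma~\ref{R3-L}: for each $X\in\mathcal{R}_{2}$ produce a small candidate family of colors from which the pair $I_X$ (and its complement $\overline I_X$) can be represented, and then invoke Hall's theorem to extend $f_3$ simultaneously. Since $X\in\mathcal{R}_{2}$ must supply two distinct representatives, one for $v_{I_X}$ and one for $v_{\overline I_X}$, the natural object is the set $A(X)$ of colors in $N_2(X)\cup N_3(X)$ not already used by $f_3$ (i.e.\ not in $C_3$, equivalently outside $\mathrm{ran}(f_3)$), paired with the structural constraint from the defining inequality of Step~6, namely $\sigma_2(X)-\sigma_3(X)\ge 5(l-k)+2u_1+2u_2+u_3+r_1+r_2$ for every $X\in\mathcal{R}_{2}$.

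First I would bound $|\mathrm{ran}(f_3)|=|C_3|$: $f_3$ is an SDR of $\mathcal{L}(M_3)$ and $M_3=M_2\cup\{v_{I_X}:X\in\mathcal{R}_3\}$, so $|C_3|=|M_3|\le u_2+u_3+r_1+r_3$ (plus the at most one extra vertex merged in Step~11(b), which forces $u_1=r_2=0$ and so does not interact with $\mathcal{R}_2$). Next, for $X\in\mathcal{R}_{2}$, I would translate the Step~6 inequality into a lower bound on the number of usable pairs: by inclusion–exclusion over the three pair-partitions $\{I,\overline I\}$ of the $4$-set $X$, the quantity $\sigma_2(X)-\sigma_3(X)$ controls $\sum_{I}\big(l(I)+l(\overline I)\big)$ minus overlaps, and using Lemma~\ref{ns=00003D0} ($L(X)=\emptyset$) together with the fact that colors of $N_2(X)$ contribute to exactly one pair while colors of $N_3(X)$ contribute to the pair missing their unique absent vertex, one gets that the ``available mass'' $\sum_{I\subseteq X,|I|=2}\big(l'(I)+l'(\overline I)\big)$, where $l'$ deletes $C_3$, is at least something like $5(l-k)+2u_1+2u_2+u_3+r_1+r_2 - 2|C_3| \ge 2(l-k) + \dots \ge 2r_2 + (\text{slack})$, comfortably $\ge 2r_2$ when $k>1$. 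From this I would extract, exactly as in Lemma~\ref{R3-L} (replacing $B(X)$ by the set of partitions $\{I,\overline I\}$ with both $l'(I),l'(\overline I)\ge 1$, of which there must be at least $r_2$ by an averaging/Claim~\ref{cl} argument: if fewer than $r_2$ partitions were ``good'' then Claim~\ref{cl} would place nearly all available colors in a single list $L(v)$, violating $l'(v)\le l<$ the available count), a family $\mathcal{B}'=\{B'(X):X\in\mathcal{R}_2\}$ of color-sets each of size $\ge r_2$, for which an SDR $g$ exists by Hall; setting $f_4(v_{I_X})$ and $f_4(v_{\overline I_X})$ to the two endpoints of the chosen partition (ensuring $\alpha\in L'(I)$ iff $\beta\in L'(\overline I)$, as in the ``good, strong'' trick) and extending greedily to the rest of $\mathcal{R}_2$ finishes.

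The main obstacle I anticipate is the bookkeeping in the inclusion–exclusion step: one must verify that the constant $5(l-k)+2u_1+2u_2+u_3+r_1+r_2$ in Step~6 was chosen precisely so that, after subtracting the two representatives per part already committed in $f_3$ and accounting for the colors of $N_3(X)\subseteq C_1\subseteq C_2\subseteq C_3$ (via \eqref{R1}), the residual is at least $2r_2$ with room to spare. A secondary subtlety, as in the second half of Lemma~\ref{R3-L}, is ruling out the degenerate configuration where all three pair-partitions of some $X\in\mathcal{R}_2$ are ``bad'' or ``weak''; this is handled by Claim~\ref{cl} together with the inequality $l<|A(X)|$ that the Step~6 bound guarantees. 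Aside from these two calculations the proof is a routine Hall-theorem argument parallel to the one already given for $\mathcal{R}_3$.
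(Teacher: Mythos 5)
Your plan has the right general shape (bound the set $A(X)$ of still-available colors using the Step~6 inequality, then use Claim~\ref{cl} to rule out degenerate configurations, then apply a Hall-type argument), but there are two concrete gaps. First, the lemma also requires representatives for $L(I_X)$ \emph{and} $L(\overline I_X)$ for every $X\in\mathcal{U}_1$ --- these are in the stated family but not in $M_3$ --- and your plan never mentions them. The paper handles them first: since $l(I_X),l(\overline I_X)\ge k$ for $X\in\mathcal{U}_1$, each list loses at most $|C_3|\le u_2+u_3+r_1+r_3$ colors and retains at least $u_1$, so Theorem~\ref{thm:ERT} extends $f_3$ to an SDR $g$ covering $\mathcal{U}_1$; only then is $A(X)=N_2(X)\smallsetminus C^g$ defined for $X\in\mathcal{R}_2$, and the $2u_1$ term in $|C^g|=2u_1+u_2+u_3+r_1+r_3$ matters in the subsequent arithmetic. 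Your bookkeeping, which only subtracts $C_3$, omits this.

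Second, and more seriously, your final SDR step is mis-structured. Each $X\in\mathcal{R}_2$ needs \emph{two} distinct representatives (for $v_{I_X}$ and $v_{\overline I_X}$), but an SDR of a family $\{B'(X):X\in\mathcal{R}_2\}$ supplies only one element per part; and your proposed $B'(X)$, ``the set of partitions $\{I,\overline I\}$ with both $l'(I),l'(\overline I)\ge1$, of which there must be at least $r_2$,'' cannot have size $\ge r_2$ once $r_2>3$, since a $4$-set has only three partitions into pairs. The paper instead proves the pointwise statement \eqref{R2-LAim}: each $X\in\mathcal{R}_2$ admits a single partition with $|L(I_X)\cap A(X)|\ge r_2$ \emph{and} $|L(\overline I_X)\cap A(X)|\ge r_2$; since $L(I_X)\cap L(\overline I_X)=L(X)=\emptyset$, Theorem~\ref{thm:ERT} applied to these $r_2$ disjoint pairs of lists yields the two representatives per part simultaneously. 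Ruling out the failure of \eqref{R2-LAim} via Claim~\ref{cl} splits into two cases --- all three bad pairs share a vertex $v$, or all three avoid $v$ --- and only the second matches your sketch (``all available colors land in $L(v)$''); the first requires the double-count $2n(X)\le\sum_{v\in X}|L(v)\cap N(X)|$ and a further refinement using $6l-9k=-3b$ and \eqref{U3} when $b=1$ and $u_1=u_2=u_3=0$. Without these pieces the argument does not close.
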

\begin{proof}
 Each $X\in\mathcal{U}_{1}$ satisfies $L(I_{X}),L(\overline{I}_{X})\ge k$. Thus
$|L(I_{X})\smallsetminus C_{3}|,|L(\overline{I}_{X})\smallsetminus C_{3}|\geq k-u_{2}-u_{3}-r_{1}-r_{3}\ge u_{1}$.
By Theorem~\ref{thm:ERT}, $\{L(I_{X})\smallsetminus C_{3},L(\overline{I}_{X})\smallsetminus C_{3}:X\in\mathcal{U}_{1}\}$
has an SDR, and so $f_{3}$ can be extended to an SDR $g$ for $\mathcal{L}(M_{3}'),$
where $M_{3}':=M_{3}\cup\{I_{X},\overline{I}_{X}:X\in\mathcal{U}_{1}\}$. Let $C^{g}=\ra(g)$.
Then $|C^{g}|=2u_{1}+u_{2}+u_{3}+r_{1}+r_{3}$. Consider any $X=xyzw\in\mathcal{R}_{2}$.
Let $A(X)=N_{2}(X)\smallsetminus C^{g}$. Again by Theorem~\ref{thm:ERT} it suffices
to show:
\begin{equation}
(\exists I_{X}\subseteq X)[|I_{X}|=2\wedge|L(I_{X})\cap A(X)|\ge r_{2}\wedge|L(\overline{I}_{X})\cap A(X)|\ge r_{2}].\label{R2-LAim}
\end{equation}

Observe $\sigma_{2}(X)=n_{2}(X)+3n_{3}(X)$ and $\sigma_{3}(X)=n_{3}(X)$. So $n(X)=n_{2}(X)+n_{3}(X)=\sigma_{2}(X)-2\sigma_{3}(X)$.
By \eqref{R1}, $N_{3}(X)\subseteq C^{g}$, and by \eqref{U3} $\sigma_{3}(X)\le u_{2}+r_{1}$.
So
\begin{eqnarray}
n(X) & = & \sigma_{2}(X)-2\sigma_{3}(X)\geq5(l-k)+2u_{1}+2u_{2}+u_{3}+r_{1}+r_{2}-(u_{2}+r_{1})\nonumber \\
 & \geq & 5(l-k)+2u_{1}+u_{2}+u_{3}+r_{2}\mbox{ and}\label{duplicate}\\
|A(X)| & = & |N_{2}(X)\smallsetminus C^{g}|=|N_{2}(X)\cup N_{3}(X)\smallsetminus C^{g}|=n(X)-|C^{g}|\nonumber \\
 & \geq & 5(l-k)+2u_{1}+u_{2}+u_{3}+r_{2}-(2u_{1}+u_{2}+u_{3}+r_{1}+r_{3})\nonumber \\
 & \geq & 5(l-k)-r_{1}+r_{2}-r_{3}\geq4(l-k)+2r_{2}.\label{|A|}
\end{eqnarray}

Suppose \eqref{R2-LAim} fails. Then for each of the three partitions of $X$ into
pairs, there is a pair $uv$ with $|L(uv)\cap A(X)|\leq r_{2}-1$. 
Using Claim~\ref{cl}, there exists $v\in X$ such that either (i) $|L(vw)\cap A(X)|\leq r_{2}-1$ for all $w\in X-v$ or (ii) $|L(vw)\cap A(X)|\leq r_{2}-1$ for all $w\in X-v$. 

If (i) holds then
\[
|L(v)\cap N(X)|\leq|C^{g}|+\sum_{w\in X-v}|L(vw)\cap A(X)|\leq|C^{g}|+3r_{2}-3.
\]
Since $|L(w)\cap N(X)|\leq l$ for all $w\in X-v$,
\[2n(X)\le\sum_{v\in X}|L(v)\cap N(X)| \le 3l+(|C^{g}|+3r_{2}-3). \]
Using $|C^{g}|=2u_{1}+u_{2}+u_{3}+r_{1}+r_{3}$ and 
 \eqref{duplicate} implies 
\begin{eqnarray}
 10(l-k)+4u_{1}+2u_{2}+2u_{3}+2r_{2} &\le& 3l-2u_{1}+u_{2}+u_{3}+r_{1}+r_{3}+3r_2-3
\label{case1} \\
  4l-k+(6l-9k+3)+2u_{1}+u_{2}+u_{3}&\le& 3l+r_{1}+r_{2}+r_{3}
  \le 4l-k. \nonumber
\end{eqnarray}
Since $6l-9k=-3b$, both $b=1$ and $0=u_{1}=u_{2}=u_{3}$. Now, by \eqref{U3}, $\mu_{2}(X)\leq l-k$.
So $|L(w)\cap N(X)|\leq3(l-k)$ for all $w\in X$. Strengthening the estimate in \eqref{case1} yields
the contradiction:
\begin{eqnarray*}
 10(l-k)+2r_{2}&\le& 9(l-k)+(|C^{g}|+3r_{2}-3)\\
 l-k &\le& r_{1}+r_{2}+r_{3}-3< l-k.
\end{eqnarray*}

Thus (ii) holds.  So  
\begin{eqnarray}
|A(X)|\le l(v)+\sum_{wx\subseteq X-v}|L(uv)\cap A(X)|\leq l+3(r_{2}-1)\label{case 2}.
\end{eqnarray}
Using    \eqref{|A|}, \eqref{case 2} and $2l-3k=-b$, this yields the contradiction 
\begin{eqnarray*}
4(l-k)+2r_{2}\le |A(X)| & \le &  l+3(r_{2}-1)\\
l-k+2\le3l-4k+3 &\leq&  r_{2}\leq l-k.
\end{eqnarray*}
\end{proof}
\begin{lem}
\label{HC-lem}Suppose $X=xyzw\in\mathcal{U}_{4}$ and $|W(X)|<|G'|$. Then $b=1$,
$u_{1}=0=r_{3}$, $u_{2}+u_{3}\geq1$, and there exists a pair $J\subseteq X$
such that:
\begin{enumerate}
\item \textup{$L(J)\nsubseteq C_{4};$}
\item $|W(\{v_{J},v\})|\geq2k-1$ and if\textup{ $|W(\{v_{J},v\})|=2k-1$ then} \textup{$|W(\{v_{J},v\}\cup \dot Z)\cup C_{4}|\geq2k$}
for both $v\in\overline{J}$;
\item $|W(\overline{J}+v_{J})|\geq|G'|-1$; in particular $|W(X)|\ge|G'|-1$.
\end{enumerate}
\end{lem}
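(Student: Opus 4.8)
The plan is to prove the three structural conclusions ($b=1$, $u_1=0=r_3$, $u_2+u_3\ge 1$) first as consequences of the hypothesis $|W(X)|<|G'|$, and then to locate the required pair $J$ by a counting/averaging argument over the three partitions of $X$ into pairs, using Claim~\ref{cl} to extract a single ``bad'' vertex when the partition-by-partition bound fails. First I would compute $|G'|$ exactly: after Steps 1--10 each part in $\mathcal U_2\cup\mathcal U_3\cup\mathcal U_1\cup\mathcal R$ has been shrunk by one or two merges, so $|G'| = 4k - (u_2+u_3) - u_1 - 2(u_1+r_2) - (r_1+r_3) - \dots$ (the exact bookkeeping from Figure~\ref{figure}); the point is that $|G'|$ is roughly $2k$ plus a correction in the $u_i,r_i$. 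Then $|W(X)| < |G'|$ combined with the Small Pot bound $|W(V)|\le 4k-1$ and the inequality \eqref{spb} for $X$ (which gives $n_2(X)+2n_3(X)\ge 4(l-k)+1$) forces the parameters to be extremal. Since $2k-l = l-k+b$ and $6l-9k = -3b$, the slack in these inequalities is $O(b)$, so one expects to be driven to $b=1$ and to $u_1, r_3$ (and the various other reserved/unreserved counts that would otherwise add slack) being zero; the conclusion $u_2+u_3\ge 1$ should come from \eqref{U3}, since if $u_2=u_3=0$ then $\mu_2(X)\le l-k$, making $|W(X)|$ too large to be less than $|G'|$.

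For the existence of $J$, I would mimic the structure of the second half of the proof of Lemma~\ref{R3-L}. Let $A(X) = N_2(X)\smallsetminus C_4$ be the colors still available for a pair in $X$ (analogous to $A(\dot Y)$ there), and bound $|A(X)|$ from below using \eqref{spb}, \eqref{R1} (so $n_3(X)\le$ a controlled quantity because $N_3(X)\subseteq C_1\subseteq C_4$), and the now-known extremality of the parameters. Call a partition $\{I,\overline I\}$ of $X$ \emph{admissible} if $L(I)\nsubseteq C_4$, i.e. $L(I)\cap A(X)\ne\emptyset$, and likewise for $\overline I$; as in Lemma~\ref{R3-L}, if \emph{no} partition has both halves meeting $A(X)$ then Claim~\ref{cl} produces a vertex $v\in X$ with $A(X)\subseteq L(v)$ (case (ii)) or with $A(X)$ disjoint from all lists of $X-v$ (case (i)), and each alternative contradicts $l(v)\le l$ together with the lower bound on $|A(X)|$ and the upper bound $|W(X)|<|G'|$. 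Once we have a partition with both sides hitting $A(X)$, we pick $J$ to be the side with the larger value of $|W(\overline J + v_J)|$, or more precisely we choose among the (at least one, possibly more) admissible partitions and among the two choices of which half is $J$ so as to simultaneously satisfy (b) and (c); the SDR-extension requirement (the unstated ``(i)'' analogue, really the clause ``$f_4$ extends'') is automatic because $L(J)\cap A(X)\ne\emptyset$ gives a color not yet used by $f_4$.

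The delicate part is conclusion (b): the conditional clause ``if $|W(\{v_J,v\})|=2k-1$ then $|W(\{v_J,v\}\cup\dot Z)\cup C_4|\ge 2k$ for both $v\in\overline J$.'' This is exactly the borderline case that the later Hall's-condition verification (Lemmas~\ref{HC-lem}, \ref{HC'-lem} feed into the ``Case'' analysis for $G' $/$H$) cannot afford to lose, so the argument must track one extra color --- either a color of $\dot Z$ (the distinguished member of $\mathcal U_2$) or a color in $C_4\smallsetminus C_3$ --- that sits outside $W(\{v_J,v\})$. I would handle it by contradiction: if for some $v\in\overline J$ we had $|W(\{v_J,v\})|=2k-1$ and also $W(\{v_J,v\}\cup\dot Z)\cup C_4$ of size only $2k-1$, then $L(v)\cup L(v_J)\cup L(\dot Z)\cup C_4$ would be a set of $\le 2k-1$ colors containing all of $N_2(X)$ (since any color of a pair not containing $v$... ) --- forcing $n_2(X)$ and hence (via \eqref{spb}) $|W(X)|$ to be too small relative to $|G'|$, or else forcing $u_2 = 0$ while $u_3=0$ too, contradicting $u_2+u_3\ge 1$. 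Getting this counting tight --- with the $\dot Z$ term and the $C_4$ term both accounted for, and using that $\dot Z\in\mathcal U_2$ contributes a triple's worth of colors --- is where the bulk of the work lies; the rest is bookkeeping that parallels Lemma~\ref{R3-L} closely.
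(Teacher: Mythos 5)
Your first half is on the right track but under-specified: the paper's actual route is to show $\sigma_2(X)-\sigma_3(X)\geq 5(l-k)+2u_1+2u_2+u_3+r_1+r_2+1$ by inclusion--exclusion (else $|W(X)|\geq|G'|$), then invoke \eqref{R2} to force $r_1+r_2=l-k$ and hence $r_3=0$, and then use the quantities $\Delta_1(I)=l(I)-l(\overline I)$ and $\Delta_2(I)$ from Step~5 to get $1\leq 2b-2u_1-u_3-\Delta_1(I)-\Delta_2(I)$, which (choosing $I$ with $\Delta_1(I)+\Delta_2(I)\geq0$) yields $b=1$, $u_1=0$, $u_3\leq1$. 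Your sketch gestures at ``slack $O(b)$'' but never identifies where $r_3=0$ comes from (it is \eqref{R2}, not a generic extremality), and this omission matters because $r_3=0$ is one of the stated conclusions and is used downstream in Step~11(a).

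The genuine gap is in the second half. Conditions (2) and (3) are not obtainable by the Lemma~\ref{R3-L}-style argument you propose. Your selection rule --- find a partition both of whose halves meet $A(X)=N_2(X)\smallsetminus C_4$, then take the side maximizing $|W(\overline J+v_J)|$ --- only addresses conclusion (1). Conclusion (3) requires $|W(\overline J+v_J)|\geq|G'|-1$, and by the inequality $1\leq|G'|-|W(\overline I+v_I)|\leq 2-u_3-\Delta_1(I)-\Delta_2(I)$ a \emph{generic} pair only achieves $|G'|-2$; when $u_3=0$ you must produce a pair $J$ that is simultaneously \emph{normal} ($l(J)=l-k+u_2$, so $\Delta_1(J)\geq0$) and contained in an \emph{abnormal triple} (so $\Delta_2(J)\geq1$), and also \emph{good} ($L(J)\nsubseteq C_4$). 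The paper establishes the existence of such a $J$ by counting: at most one abnormal pair, at most one ``bad'' vertex (one with $|L(x)\cup C_0|\leq2k-1$ where $C_0=C_4\cup W(\dot Z')$), at most one bad pair per (very) abnormal triple, and $4u_2-2\leq\sigma_3(X)\leq4u_2-1$ pinning down how many abnormal triples exist --- followed by a case split (two abnormal triples; one abnormal triple with two good normal pairs; a very abnormal triple). None of this is visible in your proposal. Likewise your treatment of the borderline clause in (2) as a single global contradiction (``forcing $n_2(X)$ too small or $u_2=0$'') does not work: whether $|W(\{v_J,v\}\cup\dot Z)\cup C_4|\geq2k$ holds depends on \emph{which} $J$ you chose relative to the bad vertex and the abnormal triple, and in one subcase the paper needs the explicit computation $|L(xy)\cup L(yz)\cup L(w)|\geq k+l-u_2$ to rule out $C_0\subseteq L(J)\cup L(w)$ for both candidate pairs, and in another it tracks a single color of $L(xz)\smallsetminus(L(w)\cup L(J))\subseteq C_0$. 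You would also need the separate (easier) case $u_3=1$, where all the $\Delta$'s vanish and $J$ is found differently. As written, the proposal would not assemble into a proof without essentially reconstructing this normal/abnormal machinery.
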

\begin{proof}
Now $|G'|=3k-u_{1}-u_{2}+u_{4}-r_{1}-r_{2}$. Observe that 
\begin{equation}
\sigma_{2}(X)-\sigma_{3}(X)\geq5(l-k)+2u_{1}+2u_{2}+u_{3}+r_{1}+r_{2}+1,\label{sigma2-3:eq}
\end{equation}
since otherwise inclusion-exclusion yields the contradiction: 
\begin{eqnarray*}
|W(X)| & = & \sigma_{1}(X)-\sigma_{2}(X)+\sigma_{3}(X)\\
 & \geq & 4l-5(l-k)-2u_{1}-2u_{2}-u_{3}-r_{1}-r_{2}\\
 & \geq & 3k+(2k-l-u_{1}-u_{2}-u_{3})-u_{1}-u_{2}-r_{1}-r_{2}\\
 & \geq & 3k-u_{1}-u_{2}+u_{4}-r_{1}-r_{2}=|G'|>|W(X)|.
\end{eqnarray*}

By \eqref{sigma2-3:eq} and \eqref{R2}, $r_{1}+r_{2}=l-k$ and $r_{3}=0$. Consider
any pair $I=xy\subseteq X$. Then 
\begin{eqnarray}
\,\,\,\,\,\,\,\,\,\,\,\,\,|W(\overline{I}+v_{I})| & \geq & l(xy)+l(z)+l(w)-l(xyz)-l(xyw)-l(zw)\label{l(I)cupIbar}\\
 & \geq & 2l-2u_{2}+\Delta_{1}(I)+\Delta_{2}(I)\nonumber \\
|G'|-|W(\overline{I}+v_{I})| & \leq & b-2u_{1}+(u_{1}+u_{2}+u_{4}-l+k)-\Delta_{1}(I)-\Delta_{2}(I)\\
1 & \leq & 2b-2u_{1}-u_{3}-\Delta_{1}(I)-\Delta_{2}(I).\label{G-W}
\end{eqnarray}
By \eqref{G-W}, $\Delta_{1}(I)+\Delta_{2}(I)\leq1$. As $\Delta_{1}(I)=-\Delta_{1}(\overline{I})$
and $\Delta_{2}(I),\Delta_{2}(\overline{I})\geq0$, we could choose $I$ with $\Delta_{1}(I)+\Delta_{2}(I)\geq0$.
So $b=1$, $u_{1}=0$, $u_{3}\leq1$, and 
\begin{equation}
1\leq|G|-|W(\overline{I}+v_{I})|\leq2-u_{3}-\Delta_{1}(I)-\Delta_{2}(I)\leq2.\label{1b:eq}
\end{equation}
Furthermore, using $\Delta_{1}(I)=-\Delta_{1}(\overline{I})$ again, 
\begin{equation}
0\leq4u_{2}-\sigma_{3}(X)=\Delta_{2}(I)+\Delta_{2}(\overline{I})=\Delta_{1}(I)+\Delta_{2}(I)+\Delta_{1}(\overline{I})+\Delta_{2}(\overline{I})\leq2.\label{u3s3}
\end{equation}

By \eqref{sigma2-3:eq}, $r_{1}+r_{2}=l-k$,  $\sigma_{2}(X)\leq6\mu_{2}(X)$,
 \eqref{U3}, and $\sigma_{3}=4u_{2}-\Delta_{2}(I)-\Delta_{2}(\overline{I})$, 
\begin{eqnarray}
1+6(l-k)+2u_{2}+u_{3}+\sigma_{3}(X) & \leq & \sigma_{2}(X)\leq6(l-k+u_{2}+u_{3})\label{s}\\
1+u_{3}+6(l-k+u_{2})-\Delta_{2}(I)-\Delta_{2}(\overline{I}) & \leq & \sigma_{2}(X)\leq6(l-k+u_{2}+u_{3}).\label{sigma2}
\end{eqnarray}
By \eqref{s} $u_{2}+u_{3}\geq1$. So the first three assertions of the lemma have
been proved. It remains to find a pair $J\subseteq X$ satisfying (1--3). 

First suppose $u_{3}=1$. By \eqref{1b:eq}, $\Delta_{1}(I)+\Delta_{2}(I)=0$ for
all pairs $I\subseteq X$. So $\Delta_{1}(I)\leq0$ and $\Delta_{1}(\overline{I})\leq0$.
As $\Delta_{1}(I)=-\Delta_{1}(\overline{I})$, this implies $\Delta_{1}(I)=0=\Delta_{1}(\overline{I})$.
So $\Delta_{2}(I)=0=\Delta_{2}(\overline{I})$. By \eqref{sigma2}, there exists a
pair $I\subseteq X$ with $l(I)=l-k+u_{2}+u_{3}$. As $\Delta_{1}(I)=0$, $l(\overline{I})=l-k+u_{2}+u_{3}$.
Thus 
\[
|W(\{v_{I},v_{\overline{I}}\})|=l(I)+l(\overline{I})=2(l-k+u_{2}+u_{3})>2(l-k)+u_{2}+u_{3}\geq|C_{4}|.
\]
Pick $J\in\{I,\overline{I}\}$ such that $L(J)\nsubseteq C_{4}$. Then (1) holds.
For (2), let $v'\in\overline{J}$, and observe 
\[
|W(\{v_{J},v'\})|=l(J)+l(v')-l(J+v')\geq2l-k+u_{2}+u_{3}-u_{2}=2k.
\]
Thus $(2)$ holds. As $u_{3}=1$, \eqref{1b:eq} implies (3).

Otherwise $u_{3}=0$. Then $u_{2}\geq1$, and so $\dot Z$ is defined in Step 3. 
Put $C_{0}:=C_{4}\cup W(\dot Z')$. By Step~3, $|C_{0}|\geq|W(\dot Z')|\geq l+u_{2}$.  Call
a vertex $x\in X$ \emph{bad} if $|L(x)\cup C_{0}|\leq2k-1$; otherwise $x$ is \emph{good}.
If $x$ is bad then $|C_{0}\smallsetminus L(x)|\le 2k-1-l\leq l-k$. If another vertex $y$
is also bad, then using \eqref{U3} and \eqref{1b:eq}, 
\begin{eqnarray*}
l-k+u_{2}\geq l(xy) & \geq & |L(xy)\cap C_{0}|\geq|C_{0}|-|C_{0}\smallsetminus L(x)|-|C_{0}\smallsetminus L(y)|\\
 & \geq & l+u_{2}-2(l-k)\geq l-k+u_{2}+1,
\end{eqnarray*}
a contradiction. So at most one vertex of $X$ is bad. 

Call a pair $I\subseteq X$ \emph{bad} if $L(I)\subseteq C_{4}$; otherwise $I$
is \emph{good}. Note that if $I$ is good then $I$ satisfies (1).  By \eqref{u3s3},
\eqref{sigma2}, and $u_{3}=0$, $6(l-k+u_{2})-1\leq\sigma_{2}\leq6(l-k+u_{2})$; 
and so by \eqref{U3}, $ $every pair $I\subseteq X$ satisfies 
\[
l-k+u_{2}-1\leq l(I)\leq l-k+u_{2}.
\]
If the upper bound is sharp then call $I$ \emph{normal}; otherwise call $I$\emph{
abnormal}. Then there is at most one abnormal pair. If $I$ is normal then $l(\overline I)\le l(I)$; so $\Delta_{1}(I)\geq0$.

By \eqref{U2}, every triple $T\subseteq X$ satisfies $l(T)\leq u_{2}$. If equality
holds then call $T$ \emph{normal}; otherwise call $T$ \emph{abnormal}; if $|L(T)\cap C_{0}|\leq u_{2}-2$
then call T \emph{very abnormal}.  Suppose two  pairs $I,J\subseteq T$ are both
bad. At least one, say $I$, is normal.  Then
\begin{eqnarray}
2(l-k)+u_{2} & \geq & |C_{4}|\geq|L(I)\cup L(J)|\geq l-k+u_{2}+l(J)-l(I\cup J)\label{2bad-eq}\\
l(I\cup J) & \geq & l(J)-l+k =
\begin{cases}
u_2 & \mbox{if $J$ is normal}\\
u_2-1 & \mbox{if $J$ is abnormal}
\end{cases}.\nonumber 
\end{eqnarray}
So an abnormal triple contains at most one bad, normal pair, and a very abnormal
triple contains at most one bad pair. A pair $I$ contained in an abnormal triple
satisfies $\Delta_{2}(I)\geq1$. 

Let $J$ be a good, normal pair contained in a abnormal triple $T$ with $w\in X\smallsetminus T$.
Then  $\Delta_{1}(J)+\Delta_{2}(J)\geq1$. So $J$ satisfies
(3) by \eqref{1b:eq}. Also, 
\begin{equation}
|W(v_{J},v)|=l(J)+l(v)-l(J+v)\geq \begin{cases}
2l-k+u_{2}-(u_{2}-1)=2k & \mbox{if $v\in T\smallsetminus J$}\\
2l-k+u_{2}-u_{2}=2k-1 & \mbox{if $v=w$}
\end{cases}.\nonumber 
\label{A-eq}
\end{equation}
So ({*}) $J$ satisfies (2), provided $|W(v_{j},w)\cup C_{0}|\geq2k$. In particular, (2) holds if $w$ is good.

By \eqref{sigma2} and \eqref{u3s3}, $1\leq\Delta_{2}(I)+\Delta_{2}(\overline{I})\leq2$. As $\sigma_{3}=4u_{2}-\Delta_{2}(I)-\Delta_{2}(\overline{I})$, 
we have $4u_{2}-2\leq\sigma_{3}(X)=4u_{2}-1$. In the first case there is one abnormal
triple. In the second case, either there is a very abnormal triple or there are two
abnormal triples. 

First suppose there are two abnormal triples. Choose an abnormal triple $T$ so that if there is a
bad vertex then it is in $T$. As $T$ contains three pairs and at most one is bad
and at most one is abnormal, $T$ contains a good, normal pair $J$. Say $J=yz$,
$T=xyz$, and $w\in X\smallsetminus T$. Then $w$ is good, and thus $J$ satisfies
(2) by ({*}).

Otherwise, let $T=xyz$ be the only abnormal triple and $w\in X\smallsetminus T$.
There is at most one abnormal pair, and only if $T$ is very abnormal. So $T$ contains
at most one bad pair. Now suppose $T$ has two good, normal pairs $xy$ and $yz$.
By ({*}), some $J\in P:=\{xy,yz\}$ satisfies (2), unless $C_{0}\subseteq L(J)\cup L(w)$
for both $J\in P$. Then, using $u_1=u_3=r_3=0$,
\[
l+u_{2}=|C_{0}|\leq|L(xy)\cup L(w)|+|L(yz)\cup L(w)|-|L(xy)\cup L(yz)\cup L(w)|.
\]
As $T$ is abnormal, and both $xy$ and $yz$ are normal, 
\begin{eqnarray*}
|L(xy)\cup L(yz)\cup L(w)| & = & l(xy)+l(yz)+l(w)-l(xyw)-l(yzw)-l(xyz)\\
 & \geq & 3l-2k+2u_{2}-(3u_{2}-1)=k+l-u_{2}.
\end{eqnarray*}
Combining the last two expressions yields the contradiction,
\[
l+u_{2}\leq|C_{0}|\leq2(2k-1)-(k+l-u_{2})=3k-1-l+u_{2}-1=l+u_{2}-1.
\]

Otherwise, $T$ is very abnormal, and (say) both $xz$ is bad and $J=yz$ is normal. As $T$ contains at most one bad pair, $yz$ is also good. Since $xz$ is bad, $xz\subseteq C_0$. 
Now
\[
|C_0\smallsetminus W(\{v_J,w\})|\ge|L(xz)\smallsetminus(L(w)\cup L(J))|\geq l-k+u_{2}-(u_{2}-2)\geq1,
\]
and (2) holds by ({*}), since $\emptyset\ne L(xz)\smallsetminus(L(w)\cup L(J))\subseteq C_{0}$
implies 
\[
|W(\{v_{J},w\}\cup C_{0})|\geq|W(\{v_{J},v\})|+1\geq2k.
\]
\end{proof}
\begin{lem}
\label{HC'-lem}Suppose $b=1$ and $X=xyzw\in\mathcal{U}_{4}$. If 
\[
|W(xyz)|\leq2k+u_{4}-1<|W(X)|
\]
 then $u_{1}=0$ and there exists a pair $J\subseteq X$ such that:
\begin{enumerate}
\item \textup{$L(J)\nsubseteq C_{4};$}
\item $|W(\{v_{J},v\})|\geq2k$ for $v\in xyz\smallsetminus J$ and $|W(\{v_{J},w\})|\geq2k-1+u_{3}$;
and
\item $|W(\overline{J}+v_{J})|\geq2k+u_{4}$.
\end{enumerate}
\end{lem}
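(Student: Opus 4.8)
The plan is to follow the template of the proof of Lemma~\ref{HC-lem}, with the global hypothesis $|W(X)|<|G'|$ there replaced by the local hypothesis $|W(xyz)|\le 2k+u_4-1<|W(X)|$. Write $X=xyzw$. Since $L(X)=\emptyset$, $\sigma_4(X)=0$, so inclusion--exclusion gives $|W(X)|=4l-\sigma_2(X)+\sigma_3(X)$ and $|W(xyz)|=3l-\bigl(l(xy)+l(xz)+l(yz)\bigr)+l(xyz)$; using $b=1$ (so $4l=6k-2$, $2l-3k=-1$, $2(l-k)=k-1$) the hypotheses become $\sigma_2(X)-\sigma_3(X)\le 4l-2k-u_4$ and $l(xy)+l(xz)+l(yz)-l(xyz)\ge 5(l-k)+2-u_4$. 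As in the proof of Lemma~\ref{HC-lem} I also record $n_3(X)\le u_2+r_1$ by \eqref{R1}, $l(T)\le u_2$ for every triple $T\subseteq X$ by \eqref{U2}, and $l(I)\le l-k+u_2+u_3$, $\min\{l(I),l(\overline{I})\}\le k-1$ for every pair $I\subseteq X$ by \eqref{U3} and \eqref{U-1-eq}.

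Next I would establish the analogue of \eqref{sigma2-3:eq}, $\sigma_2(X)-\sigma_3(X)\ge 5(l-k)+2u_1+2u_2+u_3+r_1+r_2+1$, by feeding the two displayed hypotheses and the pair/triple bounds into inclusion--exclusion; \eqref{R2} then forces $r_1+r_2=l-k$ and $r_3=0$. For a pair $I=xy\subseteq X$ with $\overline{I}=zw$, inclusion--exclusion and the definitions of $\Delta_1,\Delta_2$ give $|W(\overline{I}+v_I)|\ge 2l-2u_2+\Delta_1(I)+\Delta_2(I)$ as in \eqref{l(I)cupIbar}, and running the manipulation of \eqref{G-W}--\eqref{u3s3} --- choosing, for each pair, whichever of $I,\overline{I}$ makes $\Delta_1+\Delta_2\ge0$, possible since $\Delta_1(I)=-\Delta_1(\overline{I})$ and $\Delta_2\ge0$ --- yields $u_1=0$ and $u_3\le1$, the sharp estimate $l-k+u_2-1\le l(I)\le l-k+u_2+u_3$ for every pair $I\subseteq X$, the bound $4u_2-2\le\sigma_3(X)\le 4u_2-1$ when $u_3=0$, and (via the analogue of \eqref{s}) $u_2+u_3\ge1$, so $\dot{Z}$ is defined whenever $u_3=0$. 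This proves the first assertion, $u_1=0$.

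To locate $J$ I would reuse the vocabulary of the proof of Lemma~\ref{HC-lem}: put $C_0:=C_4\cup W(\dot Z')$ when $u_2\ge1$; call a pair $I\subseteq X$ \emph{good} if $L(I)\nsubseteq C_4$, so that a good pair meets \textup{(1)}; call a triple $T\subseteq X$ \emph{normal}, \emph{abnormal}, or \emph{very abnormal} according to whether $l(T)=u_2$, $l(T)<u_2$, or ($l(T)<u_2$ and $|L(T)\cap C_0|\le u_2-2$); and call a pair \emph{normal} if $l(I)=l-k+u_2+u_3$ and \emph{abnormal} otherwise. When $u_3=1$, the sharp bounds pin down $l(I)$ for every pair of $X$, and \textup{(1)}--\textup{(3)} are verified by inspection exactly as in the $u_3=1$ case of Lemma~\ref{HC-lem}. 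When $u_3=0$ (so $u_2\ge1$), $4u_2-2\le\sigma_3(X)\le 4u_2-1$ produces either a very abnormal triple or two abnormal triples; since at most one vertex and at most one pair of $X$ is bad, one picks an abnormal triple $T\subseteq xyz$ containing the bad vertex if any, and inside it a good, normal pair $J$, so $X\smallsetminus T=\{w\}$. Then the two estimates in \textup{(2)} follow from $|W(\{v_J,v\})|=l(J)+l(v)-l(J+v)\ge2k$ for the third vertex $v$ of $T$ and $|W(\{v_J,w\})|=l(J)+l(w)-l(J+w)\ge2k-1$, while \textup{(3)}, $|W(\overline{J}+v_J)|\ge2k+u_4$, is the point where the $\Delta$-bound $|W(\overline{J}+v_J)|\ge2l-2u_2+\Delta_1(J)+\Delta_2(J)$ must be combined with the counting identity $n_1(X)=n_3(X)+2|W(X)|-4l$ (hence $n_1(X)\le u_2+r_1+2|W(X)|-4l$) in order to bound $|W(X)|-|W(\overline{J}+v_J)|$ --- the number of colours private to $J$ --- and where a minimizing choice of $J$ among the admissible good normal pairs, together with the bad/good vertex classification if necessary, closes the case analysis.

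The delicate part is this last paragraph: securing \textup{(1)}, \textup{(2)} and \textup{(3)} for a \emph{single} pair $J$ constrained to lie inside $xyz$ (so that $X\smallsetminus J=\{v,w\}$ with $v\in xyz$ and $w$ the special vertex), while avoiding the at-most-one bad vertex and at-most-one bad pair, and doing so uniformly across the sub-cases (very abnormal triple versus two abnormal triples; $w$ good versus $w$ bad; small-$k$ degeneracies). By contrast the arithmetic translation is bookkeeping, and once the analogue of \eqref{sigma2-3:eq} is in hand the derivation of $u_1=0$ repeats \eqref{l(I)cupIbar}--\eqref{u3s3} line by line. One point to watch is whether that analogue of \eqref{sigma2-3:eq} really does follow from the local hypotheses alone; if it does not, one would supplement it with the observation that Step~11(c) is invoked only after condition~(a) fails, i.e.\ $|W(X)|\ge|G'|$ for every $X\in\mathcal{U}_4$.
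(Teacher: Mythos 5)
Your plan transplants the machinery of Lemma~\ref{HC-lem} wholesale, but its foundation fails here, and you have put your finger on the failure point without resolving it. The analogue of \eqref{sigma2-3:eq} is a \emph{lower} bound on $\sigma_{2}(X)-\sigma_{3}(X)$; since $|W(X)|=4l-\sigma_{2}(X)+\sigma_{3}(X)$, it is equivalent to an \emph{upper} bound on $|W(X)|$, and in Lemma~\ref{HC-lem} that upper bound is precisely the hypothesis $|W(X)|<|G'|$. In the present lemma every available hypothesis points the other way: $|W(X)|>2k+u_{4}-1$ is given, and your proposed supplement (failure of Step~11(a)) yields $|W(X)|\geq|G'|$ --- again a lower bound. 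The upper bound $|W(xyz)|\leq2k+u_{4}-1$ only controls the six terms $l(xy)+l(xz)+l(yz)-l(xyz)$ and says nothing about the pairs and triples containing $w$, so $\sigma_{2}(X)-\sigma_{3}(X)$ cannot be bounded from below. With \eqref{sigma2-3:eq} gone you lose $r_{1}+r_{2}=l-k$, $r_{3}=0$, the lower half of the estimate $l-k+u_{2}-1\leq l(I)$, inequality \eqref{s} (hence $u_{2}+u_{3}\geq1$ and the definedness of $\dot Z$), and the entire normal/abnormal classification; the case analysis you defer to ``the delicate part'' therefore never gets off the ground. A second structural problem: your endgame selects $J$ inside ``an abnormal triple $T\subseteq xyz$,'' which forces $T=xyz$, yet nothing shows that $xyz$ (rather than a triple containing $w$) is abnormal; and condition (2) here is asymmetric --- it demands $2k$ for the third vertex of $xyz$ but $2k-1+u_{3}$ for $w$ --- so the HC-lem bookkeeping does not transfer even formally.

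The lemma in fact has a short direct proof whose engine is exactly the hypothesis your plan never exploits decisively, namely the upper bound on $|W(xyz)|$. For any pair $vv'\subseteq xyz$, inclusion--exclusion and \eqref{U3} give $2k+u_{4}-1\geq|W(xyz)|\geq|W(vv')|=l(v)+l(v')-l(vv')\geq2l-(l-k+u_{2}+u_{3})=2k-1+u_{1}+u_{4}$, which forces $u_{1}=0$ and equality throughout. Hence $l(vv')=l-k+u_{2}+u_{3}$ and $W(vv')=W(xyz)$ for every pair in $xyz$, so each color of $W(xyz)$ lies in at least two of $L(x),L(y),L(z)$. Consequently $W(\{v_{J},v\})=W(xyz)$ (of size at least $2k$) for every pair $J\subseteq xyz$ and $v\in xyz\smallsetminus J$; a pair $J$ with $L(J)\nsubseteq C_{4}$ exists because $|C_{4}|<2k\leq|W(xyz)|$ and every color of $W(xyz)$ lies in $L(J)$ for some pair $J\subseteq xyz$; the bound for $w$ is $l(J)+l(w)-l(J+w)\geq(l-k+u_{2}+u_{3})+l-u_{2}=2k-1+u_{3}$ by \eqref{U2}; and (3) is immediate since $W(\overline{J}+v_{J})=W(xyz)\cup L(w)=W(X)$ and $|W(X)|\geq2k+u_{4}$ by hypothesis. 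No $\Delta$-machinery, no SDR bookkeeping, and no case analysis is needed.
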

\begin{proof}
Consider a pair $vv'\subseteq xyz$. Then
\begin{eqnarray*}
2k+u_{4}-1 & \geq & |W(xyz)|\geq|W(vv')|\geq l(v)+l(v')-l(vv')\\
 & \geq & 2l-(l-k+u_{2}+u_{3})\geq3k-1-k+u_{1}+u_{4}\\ &\ge& 2k+u_1+u_4-1\ge2k.
\end{eqnarray*}
So $u_{1}=0$, $l(vv')=l-k+u_{2}+u_{3}$, and $W(xyz)=W(vv')$. Since $vv'$ is arbitrary,
every color in $W(xyz)$ appears in at least two of the lists $L(x),$ $L(y)$, $L(z)$. So $W(\{v_{J},v\})=W(xyz)$ and  $|W(\{v_{J},v\})|\geq2k$
for every pair  $J\subseteq xyz$ and vertex  $v\in xyz\smallsetminus J$.
 As $|C_{4}|<2k\le|W(xyz)|$,
there is a pair $J\subseteq xyz$ with $L(J)\nsubseteq C_{4}$.  Furthermore,
\[
|W(\{v_{J},w\})|\geq l(J)+l(w)-l(J+w)\geq l-k+u_{2}+u_{3}+l-u_{2}=2k-1+u_{3}.
\]
Finally, as $W(\{v_{J},v\})=W(xyz)$ for $v\in xyz\smallsetminus J$, 
\[
|W(\overline{J}+v_{J})|=|W(\{v_{J},v\})\cup W(w)|=|W(xyzw)|\geq2k+u_{4}.
\]
\end{proof}
\begin{lem}
$G'$ is $L$-choosable. \end{lem}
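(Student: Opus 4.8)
The plan is to verify Hall's condition for the list assignment $L$ on the vertex set of the final merged graph $H$. Recall that $H=G'$ in the even case ($b=0$), and in general $H$ is obtained from $G'$ by merging one further pair contained in a single part (Step~11); hence any proper $L$-coloring of $H$ un-merges first to an $L$-coloring of $G'$ and then of $G$, contradicting the minimality of $k$ and so proving Theorem~\ref{M-th}. Since $H$ is complete multipartite with every part of size at most $4$, any SDR of $\{L(x):x\in V(H)\}$ is a proper $L$-coloring, so by Hall's theorem it suffices to show $|S|\le|W(S)|$ for every $S\subseteq V(H)$. I would establish this by a cascade of cases, each assuming all earlier ones fail, ordered by the size of the largest sub-part $S\cap X'$ met by $S$: (a) a part $X'$ with $X\in\mathcal U_4$ and $|S\cap X'|\in\{3,4\}$; (b) a part $X'$ with $X\in\mathcal U_3\cup\mathcal R_3$ entirely inside $S$; (c) a part $X'$ with $X\in\mathcal U_1\cup\mathcal U_2\cup\mathcal R_1\cup\mathcal R_2$ entirely inside $S$; (d) $S$ contains an original (unmerged) vertex; (e) every vertex of $S$ is merged.

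Cases (d) and (e) are the analogues of Cases~4 and~5 in the proof of Theorem~\ref{K-th}: in (e), $|S|\le|f(S)|\le|W(S)|$ via the SDR $f$ of $\mathcal L(M)$; in (d), once the heavier cases have failed $S$ meets each part in few vertices, so a count against $|\mathcal U|=2k-l$ and $|\mathcal R|=l-k$ bounds $|S|$, while the lists of original vertices (size $l$) together with the built-in lower bounds on the merged lists ($l(I_X)\ge k$, $\ge u_2$, $\ge l-k+u_2+u_3$, from the choices in Steps~2,~3,~5) bound $|W(S)|$ from below. The middle cases (b) and (c), together with the analysis of $S$ meeting two sub-parts of size $\ge 3$, reduce to inclusion--exclusion on a single part or a union of two: on a part one uses $|W(S\cap X')|\ge\sigma_1(X)-\sigma_2(X)+\sigma_3(X)$, or $l(v_{I_X})+l(\overline{I}_X)$ since $L(X)=\emptyset$, fed by the block inequalities \eqref{U2}, \eqref{U3}, \eqref{R2} (bounding $\mu_3(X)$, $\mu_2(X)$ and $\sigma_2(X)-\sigma_3(X)$) and the Small-Pot bound \eqref{spb}. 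These are longer than in the $3$-part situation because now four block types ($\mathcal U_3$, $\mathcal U_4$, $\mathcal R_3$ and the size-$2$ blocks) can each donate two vertices to $S$, but they are not conceptually hard.

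The main obstacle is case (a): a part $X=xyzw\in\mathcal U_4$ with $|W(X)|$ small is exactly the configuration Step~11 was built to destroy, and here I would invoke Lemmas~\ref{HC-lem} and~\ref{HC'-lem}. If some $\mathcal U_4$-part has $|W(X)|<|G'|$, then Lemma~\ref{HC-lem} forces $b=1$, $u_1=r_3=0$, $u_2+u_3\ge1$, and Step~11(a) has merged a pair $\dot I\subseteq\dot X$ with $|W(\overline{\dot I}+v_{\dot I})|\ge|G'|-1=|H|$, settling $S\cap\dot X'=\overline{\dot I}+v_{\dot I}$; the borderline sub-case $|W(\{v_{\dot I},v\})|=2k-1$ is raised to $2k$ by clause~(2) of that lemma via the term $C_4\cup W(\dot Z')$. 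The symmetric Step~11(c)/Lemma~\ref{HC'-lem} disposes of the remaining sub-case $|W(xyz)|\le 2k+u_4-1<|W(X)|$, while Step~11(b) with the second half of Lemma~\ref{R3-L} handles an $\mathcal R_3$-part $\dot Y$ with $|W(\dot Y)|\le 3k-1-u_2-r_1$; a $\mathcal U_4$-part left untouched by Step~11 satisfies $|W(X)|\ge|G'|\ge|H|\ge|S|$. The bookkeeping I expect to be the most delicate is confined to this odd case: one must check that Step~11 never fires twice (the role of the mutual-exclusion clauses ``(a) fails'' and ``since $r_3\ne0$, (a) is not executed'' in Step~11(b),(c)), keep track of which of $\dot u,\dot r,\ddot u$ is $1$ so that $|V(H)|=|G'|-\dot u-\dot r-\ddot u$ is known exactly, and match each resulting decrease of $|V(H)|$ by one against the corresponding lower bound ($\ge|G'|-1$, or $\ge 2k-1$ and $\ge 2k$ from the clauses~(2) of Lemmas~\ref{HC-lem} and~\ref{HC'-lem}) those lemmas provide. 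Once this is set up, case~(a) closes and the proof is complete.
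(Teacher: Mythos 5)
Your overall strategy is the paper's: verify Hall's condition on the final merged graph $H$ by a cascade of cases keyed to the sizes $|S\cap X'|$, with Lemmas~\ref{HC-lem}, \ref{HC'-lem} and the second half of Lemma~\ref{R3-L} supplying exactly the Step~11 merges needed to rescue the tight configurations, and the SDR $f$ of $\mathcal L(M)$ handling the all-merged case. Two concrete points in your outline would fail as written, though. First, your case ordering puts $X\in\mathcal U_4$ with $|S\cap X'|=3$ ahead of the cases where a part of $\mathcal U_3$ or $\mathcal R_3$ lies entirely in $S$. The bound that Lemma~\ref{HC'-lem}(3) delivers is $|W(\overline J+v_J)|\ge 2k+u_4$, and this covers $|S|$ only after one knows that no $\mathcal U_3$- or $\mathcal R_3$-part contributes three vertices to $S$; with your ordering $|S|$ can be as large as roughly $2k+u_3+u_4+r_3$ and the estimate does not close. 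The paper disposes of the $\mathcal U_3$- and $\mathcal R_3$-triples first (its Cases 2 and 3) precisely so that $|S|\le 2k+u_4$ is available when it reaches the $\mathcal U_4$-triple case.

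Second, you classify the case $X\in\mathcal U_3$, $X'\subseteq S$ as routine inclusion--exclusion ``fed by the block inequalities,'' but in the paper this is the deepest case and the one that actually forces Step~11(b) into existence. There, inclusion--exclusion with $\Delta_1(I_X)$ and $\Delta_2(I_X)$ only yields $|W|\ge|S|-b$; in the tight situation one must then deduce $b=1$, $u_1=r_2=\dot r=\Delta_1(I_X)=0$ and $r_1\le u_2$, invoke \eqref{U-1-eq} to conclude $u_4\ge1$ (otherwise $l(I_X)=l(\overline I_X)\ge k$ would contradict $u_1=0$), hence $r_3\ge1$, hence some $Y\in\mathcal R_3$ lies entirely in $S$ with $|W(Y')|\le 3k-1-u_2-r_1$ --- which is exactly the trigger for Step~11(b) and yields the contradiction $\dot r=1$. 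Your sketch mentions Step~11(b) only as a device for $\mathcal R_3$-parts with small $W$ and never connects it to the $\mathcal U_3$ case, so this chain is missing. The remaining cases ($|S\cap X'|=2$ for the various blocks, the original-vertex count $|S|\le l$, and the all-merged SDR case) you describe correctly and in essentially the paper's way.
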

\begin{proof}
First observe that if $k$ is even then $b=\dot{u}=\ddot{u}=\dot{r}=0$ and $H=G'$.
In this case the following argument is much simpler. 

Using Hall's Theorem it suffices to show $|S|\leq|W|:=\left|\bigcup_{x\in S}L(x)\right|$
for every $S\subseteq V(H)$. Suppose for a contradiction that $|S|>|W|$ for some
$S\subseteq V(H)$. We consider several cases. Each case assumes the previous cases
fail.
\begin{enumerate}[label=\textbf{Case \arabic*:}, wide=\parindent, labelindent=0pt ]
\item There is $X\in\mathcal{U}_{4}$ with $|S\cap X|=4$. 
 Then $|W|<|S|\le|G'|$. 
By Lemma~\ref{HC-lem}, $b=1$ and $|G'|-1=|W(X)|<|S|=|G'|$. So
\ref{extra-Step}(a) is executed, and $S=V(G')$. In particular, $\dot X\subseteq S$. Thus 
\[
|S|\leq|H|=|G'|-1\leq|W(\overline{J}+v_{J})|\leq|W(\dot X)|\leq|W|.
\]

\item  There exists $Z=xyzw\in\mathcal{U}_{3}$ with $|S\cap Z'|=3$. Now $|S|\leq3k-u_{1}-u_{2}-r_{1}-r_{2}-\dot{r}$, since Case 1 fails.
Say $I_{Z}=xy$. By Step 4, $\Delta_{1}(xy)\ge0$ and $l(xyz)+l(xyw)=2u_2-\Delta_2(xy)$.   By Step 3, $l(xyz)+l(xyw)\leq u_{2}+r_{1}$. So
\begin{eqnarray}\label{case2}
|W| & \geq & |W(Z')|\ge l(xy)+l(z)+l(w)-l(xyz)-l(xyw)-l(zw) \\
 & = & 2l+\Delta_{1}(xy)-2u_{2}+\Delta_2(xy)=3k-b+\Delta_{1}(xy)-2u_{2}+\Delta_2(xy)
 \notag\\&\geq& 3k-b+\Delta_{1}(xy)-u_{2}-r_{1}\ge |S|-b.\notag
\end{eqnarray}
As $|S|>|W|$ equality holds throughout. 
Thus $b=1$, $u_{1}=r_{2}=\dot{r}=\Delta_{1}(xy)=0$,  $r_{1}\leq u_{2}$, and (*) 
$Y'\subseteq S$ for all $Y\in \mathcal R_3$.
If $u_{4}=0$ then 
\[
k=l-k+u_{2}+u_{3}\leq l(xy)=l(\overline{xy})+\Delta_{1}(xy)=l(\overline{xy}).
\]
By \eqref{U-1-eq} this contradicts $u_{1}=0$. So $u_{4}\geq1$, $u_{3}+u_{4}\geq2$,
and 
\[
r_{1}+r_2\leq u_{2}+0=2k-l-u_{3}-u_{4}\leq l-k-1.
\]
Thus $r_3\ge1$. Say $Y\in\mathcal R_3$.  By (*), $Y'\subseteq S$; by \eqref{case2}, $|W(Y')|\le |W|=3k-1-u_2-r_1$.
 So, using  $b=1$ and $u_1=0=r_2$, Step 11(b) is executed, and $\dot r=1$, a contradiction.

\item There exists $X=wxyz\in\mathcal{R}_{3}$ with $|S\cap X'|=3$. Say $I_{X}=xy$. Now
$|S|\leq3k-u_{1}-u_{2}-u_{3}-r_{1}-r_{2}-\dot{r}$. By Step~7, $l'(xy)\geq l'(wz)$.
By \eqref{R1}, 
 $N_3(X)\subseteq C_1\subseteq C_2$. So 
$l'(xyz)=0=l'(xyw)$. Set $t=|C_{2}\cap W|$. Then $t\leq u_{2}+u_{3}+r_{1}$.
So 
\begin{eqnarray*}
|W| & = & |W\smallsetminus C_{2}|+|C_{2}\cap W|\geq l'(xy)+l'(z)+l'(w)-l'(xyz)-l'(xyw)-l'(zw)+t\\
 & \ge & l'(xy)+l(z)-t+l(w)-t-l'(zw)+t\\
 & \geq & 3k-b-(u_{2}+u_{3}+r_{1})\ge |S|-b.
\end{eqnarray*}
Thus $b=1$, $0=r_{2}=u_{1}=\dot r$, and $|W(X)|\le |W|\le3k-1-u_2-r_1$. So Step 11(b) is executed, and $\dot{r}=1$, a contradiction.

\item There exists $X\in\mathcal{U}_{4}$ with $|S\cap X'|=3$. As the previous cases fail,
$|S|\leq2k+u_{4}$. Let $xy\subseteq S\cap X'\smallsetminus M$. By \eqref{U3},  
\begin{eqnarray*}
|W| & \geq & l(x)+l(y)-l(xy)\geq3k-b-(l-k+u_{2}+u_{3})\\
 & \geq & 2k+(2k-l)-(u_{2}+u_{3})-b\geq2k+u_{1}+u_{4}-b\ge |S|-b.
\end{eqnarray*}
So $b=1$, $u_{1}=0$,  $|W|=2k+u_{4}-1$, and $|S|=2k+u_4$. Thus $S$ has exactly two vertices
in every class of $\mathcal{P}'\smallsetminus\mathcal{U}'_{4}$ and exactly three
vertices in every class of $\mathcal{U}'_{4}$. In particular, $\dot{Z}'\subseteq S$.
If $\dot{u}=1$, 
 then $\dot{X}'\subseteq S$ and 
$|W(\dot X')|\geq|G'|-1\ge2k+u_4\ge |S| $ by
Lemma~\ref{HC-lem}; else $|W(X)|\geq2k+u_{4}$. If $\ddot{u}=1$ then $\ddot{X}'\subseteq S$
and $|W|\geq|W(\ddot{X}')\geq|S|$ by Lemma \ref{HC'-lem}; else $X=X'$. As Step 11(c) is not executed,
\[
|W|\geq|W((S\cap X)\cup \dot Z)|\ge 2k+u_4\geq|S|.
\]

\item There exists $X\in\mathcal{U}_{1}$ with $|S\cap X'|=2$. Say $S\cap X'=\{v_{I},v_{\overline{I}}\}$.
As the previous cases fail, $|S|\leq2k$. Now 
\begin{eqnarray*}
|W|  \geq  L(v_{I})+L(v_{\overline{I}})\geq2k\geq|S|.
\end{eqnarray*}

\item There exists $X\in\mathcal{U}_{3}$ with $|S\cap X'|=2$. Say $S\cap X'=vv'$. As
the previous cases fail, $|S|\leq2k-u_{1}$. If $v,v'\notin M$ then $\overline{I}_{X}=vv'$.
By \eqref{U-1-eq}, $l(\overline{I}_{X})\leq k-1$. So
\[
|W(vv')|\ge l(v)+l(v')-l(vv')\ge2l-(k-1)\geq2k\ge|S|.
\]
Otherwise $v=v_{xy}$, where $I_{X}=xy,$ and $v'=z\notin M$. Then
\begin{eqnarray*}
|W(vv')| & \geq & l(v_{xy})+l(z)-l(xy+z)\\
 & \geq & l-k+u_{2}+u_{3}+l-u_{2}\geq2k-b+u_{3}\geq2k\geq|S|.
\end{eqnarray*}

\item There exists $X\in\mathcal{U}_{4}$ with $|S\cap X'|=2$. Say $S\cap X'=vv'$. If
possible, choose $X$ so that $S\cap X'\cap M=\emptyset$. As the previous cases
fail, $|S|\leq2k-u_{1}-u_{3}$. If $v,v'\notin M$ then 
\begin{eqnarray}
|W(vv')| & = & l(v)+l(v')-l(vv')\geq2l-(l-k+u_{2}+u_{3})\notag\label{2U4-eq}\\
 & \geq & 2k-b+u_1+u_{4}\geq2k\geq|S|.
\end{eqnarray}
Else $b=1$, and (say) $v\in M$. By Step 11, $v=v_{\dot{I}}$ or $v=v_{\ddot{I}}$, and
$u_{1}=0$.

If $v=v_{\dot{I}}$ then Step 11(a) was executed. So (i) $r_{3}=0$, (ii) $|W(vv')|\geq2k-1$,
and (iii) if $|W(vv')|=2k-1$ then $u_{2}\geq1$ and $|W(vv'\cup\dot{Z}')\cup C_{4}|\geq2k$.
Since 
\[
2k\geq|S|>|W|\geq|W(vv')|\geq2k-1,
\]
$|S|=2k$. Thus $S$ contains exactly two vertices of each part $Y'\in\mathcal{P}'$. In particular, $\dot{Z}'\subseteq S$.
The choice of $X$ implies $u_{3}=0$ and $u_4=1$; thus $u_{2}=l-k\geq1$. Since $u_{3}=0=r_{3}$, $M_{4}\subseteq S$. So $|W|\geq|W(vv'\cup\dot{Z}')\cup C_{4}|\geq2k$,
a contradiction.

Otherwise $x=v_{\ddot{I}}$. Then Step 11(c) was executed. So  there
is a part $\ddot{X}=xyzw\in\mathcal{U}_{4}$ with $\ddot{I}=xy$ such that 
\[
|W(xyz\cup \dot Z)|\leq2k+u_{4}-1<|W(\ddot{X})|,
\]
 $|W(\{v_{xy},w\})|\geq2k-1+u_{3}$, and $|W(\{v_{xy},z\})|\geq2k$. 
 So
we are done, unless $v'=w$ and 
\[
2k\geq|S|>|W(\{v_{xy},w\})|\geq2k-1+u_{3}.
\]
 Thus $u_3=0$ and $|S|=2k$. So $S$ contains exactly two vertices
of each class $Y'\in\mathcal{P}'$. In particular, $\dot{Z}'\subseteq S$. As
$|W(\ddot X)|>|W(xyz\cup \dot Z)|$,  we have
 $|L(w)\smallsetminus W(xyz\cup\dot{Z}')|\ge 1$. So
\[
|W|\geq|W(\{v_{xy},w\}\cup \dot{Z}')|\geq W(\dot{Z}')+1=l+u_{2}+1=2l-k+1=2k.
\]

\item There exists $X=xyzw\in\mathcal{U}_{2}$ with $|S\cap X'|=2$. Say $S\cap X'=\{v_{I},w\}$.
As the previous cases fail, $|S|\le2k-u_{1}-u_{3}-u_{4}=l+u_{2}$. Since $L(xyz)\cap L(w)=\emptyset$,
we have 
\[
|W|\geq|W(X')|\ge l(xyz)+l(w)\ge u_{2}+l\ge|S|.
\]

\item Otherwise. As the previous cases fail, 
\[
|S|\leq u_{1}+u_{2}+u_{3}+u_{4}+2|\mathcal{R}|=l.
\]
As $\mathcal{L}(M)$ has an SDR, there is a vertex $x\in S\smallsetminus M$. Thus
$|W|\geq l(x)=l\geq|S|$. 
\end{enumerate}
\end{proof}
We are done.
\end{proof}

\section{On-line Choosability}

By Theorem~\ref{M-th}, $\ch(K_{4*3})=4$. Using a computer we have checked that $\ch^{OL}(K_{4*3})=5$, but do not have a readable argument to verify the upper bound.  Here we prove the lower bound.

\begin{thm} 
$\ch^{OL}(K_{4*3}) \ge5$.
\end{thm}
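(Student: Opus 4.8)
The plan is to exhibit an explicit strategy for Alice (the Lister/Presenter) in the on-line coloring game on $K_{4*3}$ that forces Bob to present some vertex a fifth time, i.e.\ to show $K_{4*3}$ is not on-line $4$-choosable. Label the three parts $X_1=\{a_1,a_2,a_3,a_4\}$, $X_2=\{b_1,b_2,b_3,b_4\}$, $X_3=\{c_1,c_2,c_3,c_4\}$, each of size four. Since $\ch^{OL}(K_{4*3})\ge 5$ is equivalent to saying that with budget $f\equiv 4$ Bob loses, I will think of it in the ``secret list'' reformulation from the introduction: Alice may adaptively reveal, at each round, the set of vertices whose (virtual) list contains the current color, Bob colors an independent subset, and Alice wins if she can keep revealing until some vertex has been shown five times while still uncolored. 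The key structural fact I will lean on is that a part $X_i$ can only be finished off once two colors have been ``used up'' inside it in the sense that every remaining vertex of $X_i$ has been presented with those colors and not taken — this is exactly the obstruction exploited in the offline lower bound Example for $s=4$ (which only gives $\lceil\frac{3k-1}{2}\rceil=4$ here), so the extra round must come from adaptivity.

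The concrete approach: first I would run a ``symmetric'' phase imitating the offline bad list assignment. Partition a color pool $\{1,2,3\}$ as singletons and, for round $i\in\{1,2,3\}$, present the set of all $v_{p,j}$ with $j\ne i$ across all three parts — three rounds, after which (if Bob plays optimally) each part has exactly one vertex still uncolored, or Bob has been forced to ``waste'' colorings. Tracking the at-most-$4$ budget, after these rounds the uncolored vertices each have $1$ presentation used, so three rounds remain for each. The second, adaptive phase is where the real work is: depending on \emph{which} independent sets Bob chose (there are only finitely many response patterns up to the obvious $S_3\times S_4^{\,3}$ symmetry), Alice picks a pair of parts, say $X_1$ and $X_2$, that are ``out of sync,'' and introduces fresh colors $4,5,\dots$ designed so that to color the survivors of $X_1$ Bob must use colors that then block $X_2$, and vice versa, chaining the deficit onto a single vertex. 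I would organize this as a short decision tree / case analysis on Bob's moves, maintaining the invariant ``there is a vertex $v$ with presentation-count $=c$ and at least $5-c$ more forced presentations coming,'' and checking the invariant is preserved at each branch.

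The main obstacle I expect is the adaptive phase bookkeeping: because Bob can color up to three vertices per round (one from each part) and chooses his independent sets reactively, Alice must design each revealed set so that \emph{every} legal Bob response either immediately advances the count on the target vertex or moves to a position from which it can. Controlling the branching — and in particular ruling out Bob's ability to ``spread the damage'' across all twelve vertices so that none reaches five — is delicate, and is presumably why the authors remark that even the matching upper bound $\ch^{OL}(K_{4*3})\le 5$ they verified by computer ``does not have a readable argument.'' I would handle this by reducing to a small number of canonical Bob configurations after the symmetric phase (using symmetry to fix names), and for each one writing down an explicit three-to-four round continuation for Alice, terminating in a forced fifth presentation; the verification of each branch is then routine counting against the budget $f\equiv 4$.
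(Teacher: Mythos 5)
Your overall plan---an explicit adaptive strategy for Alice, analyzed by a case tree over Bob's responses and terminating in positions that are not list-colorable with the remaining budgets---is exactly the shape of the paper's argument (its Figure 2 is precisely such a decision tree). But as written the proposal has a genuine gap: the entire content of the proof is the explicit tree, and you have only promised to produce it (``I would organize this as a short decision tree,'' ``the verification of each branch is then routine counting''). Nothing is actually verified, no terminal non-choosable positions $(G,f)$ are exhibited, and the claimed invariant is never established, so there is no proof to check.

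There is also a concrete structural error that undermines the feasibility of your first phase. In $K_{4*3}$ vertices in distinct parts are adjacent, so an independent set is contained in a \emph{single} part; Bob cannot ``color up to three vertices per round (one from each part)''---he colors some subset of the presented vertices of one part (possibly all of them). Consequently, after your three symmetric rounds it is false that ``each part has exactly one vertex still uncolored'': in each round Bob removes presented vertices from only one part (and may decline vertices he is offered), so the position after the symmetric phase is quite different from what you describe, and the budget bookkeeping that follows does not go through. The paper's strategy, by contrast, presents only two vertices per part on the first move and then branches on which part Bob clears, tracking list sizes $f(v)$ explicitly at every node until reaching small positions where a counting argument shows Bob cannot finish. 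To repair your proposal you would need to redo the phase-one analysis with the correct notion of independent set and then actually write out and check every branch of the adaptive phase.
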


\begin{figure}
\begin{tikzpicture}
 

\fill[fill=gray!30!white] (-0.8,5) -- (0.8,5) -- (0.8,6) -- (-0.8,6) -- (-0.8,5);
\fill[fill=gray!30!white] (1.7,6) -- (3.3,6) -- (3.3,6.55) -- (2.77,6.55) -- (2.77,7.1) -- (1.7,7.1) -- (1.7,5);
\fill[fill=gray!30!white] (4.2,6) -- (5.27,6) -- (5.27,6.55) -- (5.8,6.55) -- (5.8,7.1) -- (4.2,7.1) -- (4.2,5);
\fill[fill=gray!30!white] (6.75,6) -- (7.9,6) -- (7.9,7.1) -- (6.75,7.1) -- (6.75,6);
\fill[fill=gray!30!white] (1.7,3) -- (2.23,3) -- (2.23,3.55) -- (3.3,3.55) -- (3.3,4.1) -- (1.7,4.1) -- (1.7,3);
\fill[fill=gray!30!white] (4.2,3) -- (4.73,3) -- (4.73,4.1) -- (4.2,4.1) -- (4.2,3);
\fill[fill=gray!30!white] (4.73,3) -- (5.27,3) -- (5.27,3.55) -- (4.73,3.55) -- (4.73,3);
\fill[fill=gray!30!white] (6.7,3) -- (7.23,3) -- (7.23,3.55) -- (8.3,3.55) -- (8.3,4.1) -- (6.7,4.1) -- (6.7,3);
\fill[fill=gray!30!white] (1.7,0.5) -- (3.3,0.5) -- (3.3,1.05) -- (1.7,1.05) -- (1.7,0.5);
\fill[fill=gray!30!white] (4.2,0.5) -- (5.8,0.5) -- (5.8,1.05) -- (4.2,1.05) -- (4.2,0.5);


\node (a0) at (0,6) {
$\begin{array}{ccc}
4 & 4 & 4 \\
4 & 4 & 4 \\
4 & 4 & 4 \\
4 & 4 & 4
\end{array}$
};

\node (a1) at (2.5,6) {
$\begin{array}{ccc}
4 & 4 & 4 \\
4 & 4 & 4 \\
3 & 3 & \\
3 & 3 & \\
\end{array}$
};

\node (a2) at (5,6) {
$\begin{array}{ccc}
3 & 3 & 4 \\
3 & 3 & \\
3 & 3 & \\
3 & 3 & \\
\end{array}$
};

\node (a3) at (7.5,6) {
$\begin{array}{ccc}
3 & 3 & \\
3 & 3 & \\
2 & 2 & \\
2 & 2 & \\
\end{array}$
};

\node (a4) at (9.75,6) {
$\begin{array}{ccc}
2 & 2 \\
2 & 2 \\
2 & \\
2 &
\end{array}$
};

\node (b1) at (2.5,3) {
$\begin{array}{ccc}
3 & 3 & 4 \\
3 & 3 & 3 \\
3 & & \\
3 & & \\
\end{array}$
};

\node (b2) at (5,3) {
$\begin{array}{ccc}
3 & 3 & 3 \\
3 & 3 & \\
2 &  & \\
2 &  & \\
\end{array}$
};

\node (b3) at (7.5,3) {
$\begin{array}{ccc}
2 & 3 & 3 \\
2 & & \\
2 & & \\
2 & & \\
\end{array}$
};

\node (b4) at (9.75,3) {
$\begin{array}{cc}
2 & 2 \\
2 & \\
1 & \\
1
\end{array}$
};

\node (c1) at (2.5,0.5) {
$\begin{array}{ccc}
3 & 3 & 3 \\
2 & 3 & 3 \\
\end{array}$
};

\node (c2) at (5,0.5) {
$\begin{array}{ccc}
2 & 3 & 3 \\
2 & 2 & \\
\end{array}$
};

\node (c3) at (7.5,0.5) {
$\begin{array}{ccc}
2 & 2 & 2 \\
2 & & \\
\end{array}$
};

\node (d1) at (2.5,-1.5) {
$\begin{array}{ccc}
3 & 3 & 2 \\
2 & 2 & \\
\end{array}$
};

\node (d2) at (5,-1.5) {
$\begin{array}{cc}
2 & 2 \\
1 & 2
\end{array}$
};


\draw[->] (a0) to (a1);
\draw[->] (a1) to (a2);
\draw[->] (a2) to (a3);
\draw[->] (a3) to (a4);
\draw[->] (a1) to (b1);
\draw[->] (a2) to (b2);
\draw[->] (b1) to (b2);
\draw[->] (b2) to (b3);
\draw[->] (b3) to (b4);
\draw[->] (b1) to (c1);
\draw[->] (b2) to (c2);
\draw[->] (b3) to (c3);
\draw[->] (c1) to (c2);
\draw[->] (c2) to (c3);
\draw[->] (c1) to (d1);
\draw[->] (c2) to (d2);


\draw (-0.8,5) -- (0.8,5) -- (0.8,7.1) -- (-0.8,7.1) -- (-0.8,5);
\draw (1.7,5) -- (3.3,5) -- (3.3,7.1) -- (1.7,7.1) -- (1.7,5);
\draw (4.2,5) -- (5.8,5) -- (5.8,7.1) -- (4.2,7.1) -- (4.2,5);
\draw (6.75,5) -- (7.9,5) -- (7.9,7.1) -- (6.75,7.1) -- (6.75,5);
\draw (9.2,5) -- (10.3,5) -- (10.3,7.1) -- (9.2,7.1) -- (9.2,5);

\draw (1.7,2) -- (3.3,2) -- (3.3,4.1) -- (1.7,4.1) -- (1.7,2);
\draw (4.2,2) -- (5.8,2) -- (5.8,4.1) -- (4.2,4.1) -- (4.2,2);
\draw (6.7,2) -- (8.3,2) -- (8.3,4.1) -- (6.7,4.1) -- (6.7,2);
\draw (9.2,2) -- (10.3,2) -- (10.3,4.1) -- (9.2,4.1) -- (9.2,2);

\draw (1.7,0) -- (3.3,0) -- (3.3,1.05) -- (1.7,1.05) -- (1.7,0);
\draw (4.2,0) -- (5.8,0) -- (5.8,1.05) -- (4.2,1.05) -- (4.2,0);
\draw (6.7,0) -- (8.3,0) -- (8.3,1.05) -- (6.7,1.05) -- (6.7,0);

\draw (1.7,-2) -- (3.3,-2) -- (3.3,-0.95) -- (1.7,-0.95) -- (1.7,-2);
\draw (4.45,-2) -- (5.55,-2) -- (5.55,-0.95) -- (4.45,-0.95) -- (4.45,-2);

\end{tikzpicture}
\caption{Strategy for Alice demonstrating $\ch^{OL}(K_{4*3})\ge5$.\label{figure2}}
\end{figure}
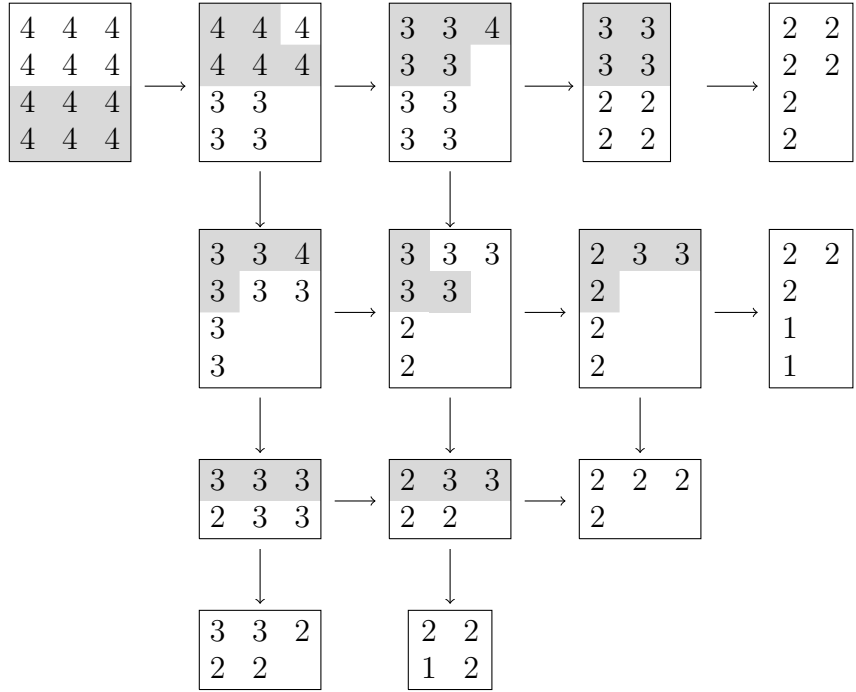

\begin{proof}
Figure~\ref{figure2} 
describes a strategy for Alice.  The top left matrix depicts the initial game position, and Alice's first move. The positions in the matrix correspond to the vertices of $K_{4*3}$ arranged so that vertices in the same part correspond to positions in the same column.  The order of vertices within a column is irrelevant, as is the order of the columns.  The numbers represent the size of the list of each corresponding vertex. The sequence of numbers represents a function $f$. The shaded positions represent the vertices that Alice presents on here first move.  

As play progresses Bob chooses certain vertices presented by Alice and passes over others.  When a vertex is chosen its position is removed from the next matrix (and the positions in its column of the remaining vertices and the order of the columns may be rearranged). When he passes over a vertex its list size is decreased by one (and its position in its column and the order of the columns may change). The arrows between the matrices  
point to the possible new game positions that arise from Bob's choice, not counting 
equivalent positions and omitting clearly inferior positions for Bob. In particular we assume Bob always chooses a maximal independent set.

For example, after Bob's first move there is only one possible game position, provided Bob chooses a maximal independent set. It is shown in the second column of the first row, along with Alice's second move. Now Bob has two possible responses that are pointed to by two arrows. Also consider the matrix in the third row and third column. There are three nonequivalent responses for Bob, but choosing the offered vertex in the second column of the matrix results in a position that is inferior to choosing the offered vertex in the first column. So this option is not shown.

Eventually, Alice forces one of five positions $(G,f)$ such that $G$ is not $f$-choosable, and Bob, being a gentleman, resigns. 
\end{proof}

\bibliographystyle{amsplain}
\bibliography{ch_citation}

\providecommand{\bysame}{\leavevmode\hbox to3em{\hrulefill}\thinspace}
\providecommand{\MR}{\relax\ifhmode\unskip\space\fi MR }
\providecommand{\MRhref}[2]{%
  \href{http://www.ams.org/mathscinet-getitem?mr=#1}{#2}
}
\providecommand{\href}[2]{#2}
\begin{thebibliography}{10}

\bibitem{Alon92}
Noga Alon, \emph{Choice numbers of graphs: a probabilistic approach.},
  Combinatorics, Probability and Computing \textbf{1} (1992), 107--114.

\bibitem{Erdos79}
Paul Erd{\H{o}}s, Arthur~L. Rubin, and Herbert Taylor, \emph{Choosability in
  graphs}, Congress. Numer., XXVI (1980), 125--157. \MR{593902 (82f:05038)}

\bibitem{Galvin95}
Fred Galvin, \emph{The list chromatic index of a bipartite multigraph}, J.
  Combin. Theory Ser. B \textbf{63} (1995), no.~1, 153--158. \MR{1309363
  (95m:05101)}

\bibitem{GK}
Nurit Gazit and Michael Krivelevich, \emph{On the asymptotic value of the
  choice number of complete multi-partite graphs}, J. Graph Theory \textbf{52}
  (2006), no.~2, 123--134. \MR{2218737 (2007a:05047)}

\bibitem{Kie2000}
H.A. Kierstead, \emph{On the choosability of complete multipartite graphs with
  part size three}, Discrete Mathematics \textbf{211} (2000), 255--259.

\bibitem{Kozik2014}
Jakub Kozik, Piotr Micek, and Xuding Zhu, \emph{Towards an on-line version of
  {O}hba's conjecture}, European Journal of Combinatorics \textbf{36} (2014),
  no.~0, 110--121.

\bibitem{Noel2013}
J.~A. {Noel}, D.~B. {West}, H.~{Wu}, and X.~{Zhu}, \emph{{Beyond {O}hba's
  Conjecture: A bound on the choice number of $k$-chromatic graphs with $n$
  vertices}}, ArXiv e-prints (2013).

\bibitem{Ohba04}
Kyoji Ohba, \emph{Choice number of complete multipartite graphs with part size
  at most three.}, Ars Comb. \textbf{72} (2004).

\bibitem{Reed02}
Bruce~A. Reed and Benny Sudakov, \emph{List colouring of graphs with at most
  $\big(2-o(1)\big)\chi$ vertices}, Proceedings of the International Congress
  of Mathematicians \textbf{Vol. III} (2002), 587--603.

\bibitem{ReedS04}
\bysame, \emph{List colouring when the chromatic number is close to the order
  of the graph.}, Combinatorica \textbf{25} (2004), no.~1, 117--123.

\bibitem{Schauz09}
Uwe Schauz, \emph{Mr. {P}aint and {M}rs. {C}orrect}, Electron. J. Combin.
  \textbf{16} (2009), no.~1, Research Paper 77, 18. \MR{2515754 (2010i:91064)}

\bibitem{Thomassen94}
Carsten Thomassen, \emph{Every planar graph is {$5$}-choosable}, J. Combin.
  Theory Ser. B \textbf{62} (1994), no.~1, 180--181. \MR{1290638 (95f:05045)}

\bibitem{Viz}
V.~G. Vizing, \emph{Coloring the vertices of a graph in prescribed colors},
  Diskret. Analiz (1976), no.~29 Metody Diskret. Anal. v Teorii Kodov i Shem,
  3--10, 101. \MR{0498216 (58 \#16371)}

\bibitem{Voigt93}
Margit Voigt, \emph{List colourings of planar graphs}, Discrete Math.
  \textbf{120} (1993), no.~1--3, 215--219. \MR{1235909 (94d:05061)}

\bibitem{Yang2003}
D.~Yang, \emph{Extension of the game coloring number and some results on the
  choosability of complete multipartite graphs}, Arizona State University,
  2003.

\end{thebibliography}

\end{document}